\author[K.J. B\"or\"oczky]{K\'aroly J. B\"or\"oczky}
\address{Alfr\'ed R\'enyi Institute of Mathematics, Re\'altanoda utca 13-15, 1053, Budapest, Hungary}  
\author[F. Fodor]{Ferenc Fodor$^{\orcidlink{0000-0001-9747-1981}}$} 
\address{Bolyai Institute, University of Szeged, Aradi v\'ertan\'uk tere 1, H-6720 Szeged, Hungary}
\email{fodorf@math.u-szeged.hu}
\author[P. Kalantzopoulos]{Pavlos Kalantzopoulos}
\address{University of Waterloo, Department of Pure Mathematics}
\email{pkalantzpoulos@uwaterloo.ca}
\title[Equality in Liakopoulos's generalized dual Loomis--Whitney inequality]{Equality in Liakopoulos's generalized dual Loomis--Whitney inequality via Barthe's Reverse Brascamp--Lieb inequality}
\newcommand{\R}{\mathbb{R}}
\newcommand{\dep}{\mathrm{dep}}
\newcommand{\dx}{{\rm d}}
\DeclareMathOperator{\conv}{conv}
\DeclareMathOperator{\relint}{relint}
\DeclareMathOperator{\lin}{lin}
\DeclareMathOperator{\inti}{int}
\DeclareMathOperator{\supp}{supp}
\DeclareMathOperator{\cl}{cl}
\newtheorem{lemma}{Lemma}
\newtheorem{theo}[lemma]{Theorem}
\theoremstyle{definition}
\theoremstyle{remark}
\newtheorem*{remarks}{Remarks}
\newtheorem{remark}[lemma]{Remark}
\begin{document}

\begin{abstract}
We use the characterization of the case of equality in  Barthe's Geometric Reverse Brascamp–Lieb inequality to characterize equality in Liakopoulos's volume estimate in terms of sections by certain lower-dimensional linear subspaces.
\end{abstract}

\maketitle

\section{Introduction and results
}
\label{secBT}

Geometric inequalities, in which the volume of a shape is given and we search for the extremal value of a quantity, have been considered since the time of the ancient Greeks and have been at the center of attention in Geometry and Analysis for the last two centuries.  The most classical one is probably the Isoperimetric inequality. Let $B^n\subset\R^n$ be the origin-centered Euclidean unit ball, and let $X\subset\R^n$ be a compact set such that ${\rm int}\,X\neq \emptyset$ and its boundary $\partial X$ is an $(n-1)$-dimensional Lipschitz manifold. Let $|X|$ denote the Lebesgue measure of $X$. We define the surface area $S(X)$ of $X$ as $S(X)=\lim_{r\to 0^+}\frac{|X+rB^n|-|X|}r$. The Isoperimetric inequality states that 
$$
S(X)\geq n|B^n|^{\frac1n}\cdot |X|^{\frac{n-1}n},
$$ 
with equality if and only if $X$ is a Euclidean ball.
One of the major results of Bent Fuglede was that he proved the optimal stability version of the Isoperimetric inequality for star-shaped $X\subset\R^n$ with Lipschitz boundary in terms of the Hausdorff distance in a series of papers \cites{Fug86,Fug89,Fug91} in 1986--1991
(see Figalli, Maggi, Pratelli
\cite{FMP10} and Fusco, Maggi, Pratelli \cite{FMP08} for optimal stability results in terms of volume difference).
Here, the Isoperimetric inequality is a consequence of the Brunn–Minkowski inequality
$$
|\alpha X+\beta Y|^{\frac1n}\geq \alpha |X|^{\frac1n}+\beta|Y|^{\frac1n}
$$
for compact sets $X,Y\subset\R^n$ with $|X|,|Y|>0$ and $\alpha,\beta>0$, where equality holds if and only if $X$ and $Y$ are convex and homothetic; namely, $Y=\lambda X+z$ for some $\lambda>0$ and $z\in\R^n$. See Figalli, Maggi, Pratelli
\cite{FMP10} for an elegant proof of the Brunn--Minkowski inequality including equality conditions and even a stability version based on optimal transport. In turn, the Brunn--Minkowski inequality has a functional form that is called the Pr\'ekopa--Leindler inequality, which we recall in a form most suitable for use in this paper: If $\sum_{i=1}^kc_i=1$ holds for $c_1,\ldots,c_k>0$ with $k\geq 2$, then for any non-negative $f_i\in L_1(\R^n)$, $i=1,\ldots,k$, we have
\begin{equation}
\label{Prekopa-Leindler}
\int_{\R^n}^*\sup_{x=\sum_{i=1}^kc_ix_i}\;\prod_{i=1}^kf_i(x_i)^{c_i}\,dx
\geq \prod_{i=1}^k\left(\int_{\R^n}f_i\right)^{c_i},
\end{equation}
where $\int_{\R^n}^*$ stands for the outer integral, as the integrand on the left hand side may not be measurable (see Gardner \cite{gardner} for a survey on various aspects of the Brunn–Minkowski inequality and the Pr\'ekopa–Leindler inequality). If $\int_{\R^n}f_i>0$ for all $i=1,,\ldots, k$, then equality  in the Pr\'ekopa--Leindler inequality \eqref{Prekopa-Leindler} implies that there exist $a_1,\ldots,a_k>0$,
$w_1,\ldots,w_k\in\R^n$ and
a log-concave function $\varphi$  such that $\sum_{i=1}^kc_i w_i=o$ and
$f_i(x)=a_i\varphi(x+w_i)$ for a.e. $x\in\R^n$ (see Dubuc \cite{Dub77} and Balogh, Krist\'aly \cite{BaK18} for the equality case, and Figalli,  van Hintum, and Tiba \cite{FHTb} for a stability version). 
Here, a function $f:\,\R^n\to [0,\infty)$ is {\it log-concave} if
$f((1-\lambda)x+\lambda y)\geq f(x)^{1-\lambda}f(y)^\lambda$ holds
 for $x,y\in\R^n$ and $\lambda\in(0,1)$; or equivalently, $f=e^{-\varphi}$ for a convex function 
$\varphi:\,\R^n\to(-\infty,\infty]$. 
If, in addition,  $0<\int_{\R^n}f<\infty$, then $\supp f=\cl \{f>0\}$ is convex,  $f$ is continuous on $\inti\supp f$,  $f$ is bounded, and for any $t$ with $0<t<\sup f$, the open set $\{f>t\}$ is convex and bounded.

In this note, we characterize equality in Liakopoulos's inequality (Theorem~\ref{Liakopoulos-RBL}) as a first step towards a possible stability version.
However, the story of this paper  originates from the classical Loomis--Whitney inequality \eqref{Loomis-Whitney-ineq} proved in
\cite{LoW49}.

Let $e_1,\ldots,e_n$ denote an orthonormal basis of $\R^n$.
 For a compact set $K\subset\R^n$ with
${\rm dim}\,{\rm aff}\,K=m$, we write $|K|$ for the $m$-dimensional Lebesgue measure of $K$. For a proper linear subspace $E$ of $\R^n$ ($E\neq \R^n$ and $E\neq\{0\}$),
let $P_E$ denote the orthogonal projection onto $E$.

\begin{theo}[Loomis, Whitney]
\label{Loomis-Whitney}
If $K\subset \R^n$ is compact, ${\rm aff}\,K=\R^n$ and $|K|>0$, then
\begin{equation}
\label{Loomis-Whitney-ineq}
|K|^{n-1}\leq \prod_{i=1}^n|P_{e_i^\bot}K|,
\end{equation}
with equality if and only if
$K=\oplus_{i=1}^nK_i$, where $\mathrm{aff}\, K_i$ is a line parallel to $e_i$.
\end{theo}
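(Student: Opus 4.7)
My plan is to reduce \eqref{Loomis-Whitney-ineq} to a functional inequality and then prove the latter by induction on $n$ via H\"older's inequality. The starting pointwise estimate
$$\mathbf{1}_K(x)=\mathbf{1}_K(x)^{n-1}\le \prod_{i=1}^n \mathbf{1}_{P_{e_i^\perp}K}(P_{e_i^\perp}x)$$
is immediate, since $x\in K$ forces $P_{e_i^\perp}x\in P_{e_i^\perp}K$ for every $i$. Taking $(n-1)$st roots and integrating reduces the task to the functional inequality
\begin{equation*}
\int_{\R^n}\prod_{i=1}^n g_i(P_{e_i^\perp}x)^{\frac1{n-1}}\,dx\;\le\;\prod_{i=1}^n\left(\int_{e_i^\perp}g_i\right)^{\!\frac1{n-1}}
\end{equation*}
for non-negative measurable $g_i:e_i^\perp\to[0,\infty)$, applied with $g_i=\mathbf{1}_{P_{e_i^\perp}K}$.

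I would establish the functional inequality by induction on $n$; the case $n=2$ is Fubini, since both exponents equal $1$. For the inductive step, split $x=(y,t)\in\R^{n-1}\times\R$. The factor $g_n(P_{e_n^\perp}x)$ depends on $y$ alone, while for $i<n$ the factor $g_i(P_{e_i^\perp}x)$, with $t$ fixed, is a function of $y$ that ignores its $i$-th coordinate. In the slice integral over $y$, I would split off $g_n^{1/(n-1)}$ via H\"older with conjugate exponents $n-1$ and $(n-1)/(n-2)$, and then invoke the $(n-1)$-dimensional inductive hypothesis (whose natural exponent $1/(n-2)$ exactly matches the one produced by H\"older) on the remaining product. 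A final H\"older in $t$ with $n-1$ equal factors, each raised to the power $n-1$, upgrades the one-dimensional section integrals $\int g_i(\cdot,t)\,dz$ to the full $L^1$-norms $\int g_i$, which yields the claim.

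For the equality statement I would track equality through each step. The pointwise bound is tight if and only if $K$ agrees, up to measure zero, with the cylinder $\bigcap_{i=1}^n P_{e_i^\perp}^{-1}(P_{e_i^\perp}K)$. Equality in the inner H\"older, combined with the inductive conclusion, forces each non-trivial slice $K_t:=\{y:(y,t)\in K\}$ to be an axis-aligned box in $\R^{n-1}$, while equality in the outer H\"older in $t$ forces the $i$-th side length of $K_t$ to be independent of $t$ for every $i<n$ and $\supp |K_t|$ to be an interval. The main obstacle is the resulting ``no-drift'' step: a priori the boxes $K_t$ could translate as $t$ varies (the cylinder condition alone permits translated copies of the same box to be stacked along the $t$-axis), and ruling this out requires combining both H\"older equality conditions with the continuity of the coordinate sections. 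Once drift is excluded, the decomposition $K=\oplus_{i=1}^n K_i$ with $\mathrm{aff}\,K_i$ parallel to $e_i$ follows.
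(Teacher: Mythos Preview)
The paper does not give a self-contained proof of Theorem~\ref{Loomis-Whitney}; it is quoted as classical and then recovered as the special case $\sigma_i=[n]\setminus\{i\}$, $s=n-1$ of the Bollob\'as--Thomason inequality, which the paper in turn obtains from the Geometric Brascamp--Lieb inequality (Theorem~\ref{BLtheo}) by taking $f_i=\mathbf{1}_{P_{E_{\sigma_i}}K}$. The equality case is likewise read off from Valdimarsson's characterization via the ``folklore'' Theorem~\ref{Bollobas-Thomason-eq}. Your route---the pointwise estimate followed by the Finner-type functional inequality proved by H\"older and induction---is the classical elementary argument, and for the inequality itself it is correct and fully self-contained. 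What the paper's viewpoint buys is uniformity: one framework simultaneously handles Loomis--Whitney, Bollob\'as--Thomason, and their equality cases without a separate induction.

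Your equality analysis, however, has a real gap. First, a minor point: the inductive equality condition does not force the slices $K_t$ to be axis-aligned \emph{boxes}; it only forces them to be Cartesian products of one-dimensional compact sets, which need not be intervals. Second, and more seriously, the ``no-drift'' step is only named, not carried out, and the tool you invoke---``continuity of the coordinate sections''---is simply unavailable for a merely compact $K$. In fact the equality statement as literally written fails without a null-set proviso: the compact set $K=(\{0\}\times[0,\tfrac12])\cup([1,2]\times[0,1])\subset\R^2$ satisfies $|K|=1=|P_{e_1^\bot}K|\cdot|P_{e_2^\bot}K|$ yet is not a Cartesian product. What actually pins down the product structure (up to null sets) is the combination of the cylinder identity $K=\bigcap_i P_{e_i^\bot}^{-1}(P_{e_i^\bot}K)$ a.e.\ with the H\"older equality conditions, argued set-theoretically; continuity plays no role.
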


In order to consider a generalization of the Loomis--Whitney inequality,
we set
$[n]:=\{1,\ldots,n\}$, and for a non-empty proper subset $\sigma\subset[n]$, we define
$E_\sigma=\lin \{e_i\}_{i\in\sigma}$. For $s\geq 1$, we say that the not necessarily distinct proper
non-empty subsets $\sigma_1,\ldots,\sigma_k\subset[n]$ form an $s$-uniform cover of $[n]$ if each
$j\in[n]$ is contained in exactly $s$ of $\sigma_1,\ldots,\sigma_k$. The Bollob\'as--Thomason inequality \cite{BoT95} is as follows.

\begin{theo}[Bollob\'as, Thomason]
\label{Bollobas-Thomason}
If $K\subset \R^n$ is compact, ${\rm aff}\,K=\R^n$ and
$\sigma_1,\ldots,\sigma_k\subset[n]$ form an $s$-uniform cover of $[n]$ for an integer $s\geq 1$, then
\begin{equation}
\label{Bollobas-Thomasson-ineq}
|K|^s\leq \prod_{i=1}^k|P_{E_{\sigma_i}}K|.
\end{equation}
\end{theo}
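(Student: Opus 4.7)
My plan is to reduce the Bollob\'as--Thomason inequality to a functional inequality of Brascamp--Lieb/Finner type, and prove that by induction on $n$ using H\"older's inequality one coordinate at a time. The starting observation is that for any $x\in K$ and any $i$ we have $P_{E_{\sigma_i}}x\in P_{E_{\sigma_i}}K$, so
$$
\mathbf{1}_K(x)\leq \prod_{i=1}^k \mathbf{1}_{P_{E_{\sigma_i}}K}\bigl(P_{E_{\sigma_i}}x\bigr)^{1/s}
$$
(both sides take values in $\{0,1\}$, and the RHS equals $1$ whenever $x\in K$). Integrating over $\R^n$ reduces the task to proving the uniform-cover inequality
\begin{equation}
\label{UCBL}
\int_{\R^n}\prod_{i=1}^k f_i\bigl(P_{E_{\sigma_i}}x\bigr)^{1/s}\,dx
\leq
\prod_{i=1}^k\Bigl(\int_{E_{\sigma_i}}f_i\Bigr)^{1/s}
\end{equation}
for arbitrary non-negative measurable $f_i$ on $E_{\sigma_i}$; specializing $f_i=\mathbf{1}_{P_{E_{\sigma_i}}K}$ then gives $|K|\leq\prod_i|P_{E_{\sigma_i}}K|^{1/s}$, which is the claim.

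To prove \eqref{UCBL} I induct on $n$. The base case $n=2$ is the classical two-dimensional Loomis--Whitney, which is a direct consequence of Fubini and H\"older. For the inductive step, split $x=(x',t)$ with $t=x_n$, and partition the indices into $I=\{i:n\in\sigma_i\}$ and $J=\{i:n\notin\sigma_i\}$. The $s$-uniform cover condition applied to the coordinate $n$ forces $|I|=s$, hence $\sum_{i\in I}\frac{1}{s}=1$, and for fixed $x'$ H\"older's inequality in $t$ with conjugate exponents all equal to $s$ gives
$$
\int_{\R}\prod_{i\in I}f_i\bigl(P_{E_{\sigma_i}}(x',t)\bigr)^{1/s}\,dt
\leq
\prod_{i\in I}\Bigl(\int_{\R}f_i\bigl(P_{E_{\sigma_i}}(x',t)\bigr)\,dt\Bigr)^{1/s}.
$$
The factors indexed by $J$ are independent of $t$, so after Fubini one obtains an integral on $\R^{n-1}$ of the same form as the left-hand side of \eqref{UCBL}, now with subspaces $\{\sigma_i\setminus\{n\}\}_{i\in I}\cup\{\sigma_j\}_{j\in J}$ inside $[n-1]$. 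Crucially, the $s$-uniform cover condition on $[n-1]$ is preserved, since every coordinate $m\neq n$ still appears exactly $s$ times in this new family. Invoking the inductive hypothesis and combining the two H\"older steps produces the desired product bound.

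The main obstacle I foresee is purely bookkeeping: after removing the coordinate $n$, the set $\sigma_i\setminus\{n\}$ may become empty (when $\sigma_i=\{n\}$), and some $\sigma_j$ may equal $[n-1]$. In both degenerate cases the corresponding factor is just a constant (an integral of $f_i$) or an integral over the full ambient space, so the contribution is harmless, but the inductive statement must be phrased to allow arbitrary (not only proper non-empty) subsets of $[n-1]$. Once \eqref{UCBL} is stated in this slightly broader form, the induction closes cleanly and yields Theorem~\ref{Bollobas-Thomason} by the reduction above.
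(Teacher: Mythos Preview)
Your argument is correct. The reduction step---bounding $\mathbf{1}_K$ by the product of the indicators $\mathbf{1}_{P_{E_{\sigma_i}}K}\!\circ P_{E_{\sigma_i}}$ and integrating---is exactly what the paper does: it observes that the coordinate subspaces $E_{\sigma_i}$ with weights $1/s$ form a Geometric Brascamp--Lieb datum (equation \eqref{GBLdata-BT}) and then says that the Bollob\'as--Thomason inequality follows from the Geometric Brascamp--Lieb inequality (Theorem~\ref{BLtheo}) with $f_i=\mathbf{1}_{P_{E_{\sigma_i}}K}$.

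Where you differ is in how the functional inequality \eqref{UCBL} is justified. The paper invokes Theorem~\ref{BLtheo} as a black box (the general Geometric Brascamp--Lieb inequality, due to Brascamp, Lieb, Ball, Barthe), whereas you prove the coordinate-subspace special case directly by peeling off one variable at a time with H\"older. This is essentially Finner's proof, and indeed the original Bollob\'as--Thomason argument; it is more elementary and entirely self-contained. The paper's route, by contrast, situates the result inside the Brascamp--Lieb framework that drives the rest of the article, which is natural given that the main theorem relies on the equality characterization in Barthe's reverse inequality.

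One small point of bookkeeping: your stated base case $n=2$ is not quite ``classical two-dimensional Loomis--Whitney'', since an $s$-uniform cover of $[2]$ can be more general (e.g.\ $\{1\},\{2\},\{1,2\}$ with $s=2$). The clean base case is $n=1$, where the inequality is exactly H\"older with exponents $s,\ldots,s$; since you already allow empty and full $\sigma_i$ in the inductive hypothesis, the descent from $n=2$ to $n=1$ goes through and this is a cosmetic fix.
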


\begin{remarks}
\hfill
\begin{itemize}

\item[(i)] If $k=n$ and $s=n-1$, in which case we may assume that $\sigma_i=[n]\backslash \{i\}$, inequality \eqref{Bollobas-Thomasson-ineq} reduces to the
Loomis--Whitney inequality \eqref{Loomis-Whitney-ineq}.

\item[(ii)]
Bennett, Carbery, Christ and Tao introduced the notion of a ''Geometric Brascamp--Lieb datum" in \cite{BCCT08}. A collection of linear subspaces $E_1,\ldots,E_k\subset \R^n$, each of dimension at least one, together with positive constants $c_1,\ldots,c_k>0$ forms a Geometric Brascamp--Lieb datum if
\begin{equation}
\label{BL-data-def}
\sum_{i=1}^kc_iP_{E_i}=I_n,
\end{equation}
where $I_n$ is the identity map on $\R^n$. The term \emph{geometric} refers to the fact that the surjective linear maps $P_{E_i}\colon \R^n\to E_i$, $i=1,\ldots, k$ are orthogonal projections.

Now, the subspaces $E_{\sigma_i}:={\rm lin}\{e_j:j\in\sigma_i\}$ in Theorem~\ref{Bollobas-Thomason} satisfy
\begin{equation}
\label{GBLdata-BT}
\displaystyle \sum_{i=1}^k \frac1s\,P_{E_{\sigma_i}}=I_n;
\end{equation}
that is, they form a Geometric Brascamp--Lieb datum, where each coefficient is $\frac1s$. 
\end{itemize}
\end{remarks}

The equality case of
the  Bollob\'as--Thomason inequality (Theorem~\ref{Bollobas-Thomason}), based on Valdimarsson's characterization theorem \cite{Val08}, has been known by experts for some time without a published proof.
Let $s\geq 1$, and let $\sigma_1,\ldots,\sigma_k\subset[n]$ be an $s$-uniform cover
   of $[n]$. 
We say that the disjoint union
 $[n]=\tilde{\sigma}_1\sqcup\ldots\sqcup\tilde{\sigma}_\ell$
forms the partition of $[n]$ induced by
the $s$-uniform cover $\sigma_1,\ldots,\sigma_k$ if
$\{\tilde{\sigma}_1,\ldots,\tilde{\sigma}_\ell\}$ consists of all non-empty distinct subsets of $[n]$
of the form $\cap_{i=1}^k\sigma_{i,0}$, where for each $i$, either $\sigma_{i,0}=\sigma_i$ or  $\sigma_{i,0}=[n]\setminus\sigma_i$. 

\begin{theo}[Folklore]
\label{Bollobas-Thomason-eq}
Let $K\subset \R^n$ be compact with 
${\rm aff}\,K=\R^n$ and $|K|>0$, and let
$\sigma_1,\ldots,\sigma_k\subset[n]$ form an $s$-uniform cover of $[n]$ for an integer $s\in\{1,\ldots,n-1\}$.
Then equality holds in \eqref{Bollobas-Thomasson-ineq} if and only if
$K=\oplus_{i=1}^\ell P_{E_{\tilde{\sigma}_i}}K$,
where $\tilde{\sigma}_1,\ldots,\tilde{\sigma}_\ell$ form the partition of $[n]$ induced by
 $\sigma_1,\ldots,\sigma_k$.
\end{theo}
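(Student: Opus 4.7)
I would split into two directions. \emph{Sufficiency}: a direct computation. Since every $\sigma_i$ is a disjoint union of atoms by construction, the hypothesis $K=\oplus_{j=1}^\ell P_{E_{\tilde\sigma_j}}K$ yields $P_{E_{\sigma_i}}K=\oplus_{j:\tilde\sigma_j\subset\sigma_i}P_{E_{\tilde\sigma_j}}K$ and hence $|P_{E_{\sigma_i}}K|=\prod_{j:\tilde\sigma_j\subset\sigma_i}|P_{E_{\tilde\sigma_j}}K|$. As the elements of a common atom share the same membership pattern in $\{\sigma_i\}$, $s$-uniformity forces each $\tilde\sigma_j$ to sit in exactly $s$ of the $\sigma_i$, so multiplying over $i$ gives $\prod_i|P_{E_{\sigma_i}}K|=\prod_j|P_{E_{\tilde\sigma_j}}K|^s=|K|^s$.

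\emph{Necessity}: induction on $\ell$; the case $\ell=1$ is vacuous. For $\ell\geq 2$ set $A=\tilde\sigma_1$, $B=[n]\setminus A$, and split $\{1,\ldots,k\}=\mathcal{F}_A\sqcup\mathcal{F}_B$ with $\mathcal{F}_A=\{i:A\subset\sigma_i\}$ (so $|\mathcal{F}_A|=s$) and $\mathcal{F}_B=\{i:A\cap\sigma_i=\emptyset\}$, a true dichotomy because $A$ is an atom. Let $B_i=\sigma_i\cap B$; then $\{B_i\}_{i=1}^k$ is an $s$-uniform cover of $B$ whose induced partition is $\{\tilde\sigma_2,\ldots,\tilde\sigma_\ell\}$, of size $\ell-1$. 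For $y\in E_A$ set $K_y=\{z\in E_B:y+z\in K\}$; Fubini yields $|K|=\int|K_y|\,dy$, $|P_{E_{\sigma_i}}K|=\int|P_{E_{B_i}}K_y|\,dy$ for $i\in\mathcal{F}_A$, and $|P_{E_{\sigma_i}}K|=|P_{E_{B_i}}P_{E_B}K|$ for $i\in\mathcal{F}_B$. Applying Theorem~\ref{Bollobas-Thomason} to each $K_y\subset E_B$, then the bound $|P_{E_{B_i}}K_y|\leq|P_{E_{B_i}}P_{E_B}K|$ for $i\in\mathcal{F}_B$, and finally H\"older on $\int\prod_{i\in\mathcal{F}_A}|P_{E_{B_i}}K_y|^{1/s}\,dy$ with the $|\mathcal{F}_A|=s$ equal exponents, reproduces $|K|^s\leq\prod_i|P_{E_{\sigma_i}}K|$.

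Under the hypothesized equality every link of this chain must be tight, yielding in particular (i) $|K_y|^s=\prod_i|P_{E_{B_i}}K_y|$ for a.e.\ $y$---so the induction hypothesis gives $K_y=\oplus_{j=2}^\ell P_{E_{\tilde\sigma_j}}K_y$---and (ii) $P_{E_{B_i}}K_y=P_{E_{B_i}}P_{E_B}K$ for a.e.\ $y$ and every $i\in\mathcal{F}_B$. The decisive combinatorial observation is that for each $j\geq 2$ there is some $i\in\mathcal{F}_B$ with $\tilde\sigma_j\subset\sigma_i$: the $s$-element sets $\{i:\tilde\sigma_1\subset\sigma_i\}$ and $\{i:\tilde\sigma_j\subset\sigma_i\}$ are distinct (different atoms have different membership patterns), so neither contains the other, and any $i$ in the second but not the first satisfies $\tilde\sigma_j\subset\sigma_i$ together with $A\not\subset\sigma_i$. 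Uniqueness of direct-sum factors, applied to (ii) inside the decomposition of (i), then pins down each $P_{E_{\tilde\sigma_j}}K_y$ independently of $y$, so $K_y$ itself is constant; hence $K=P_{E_A}K\oplus P_{E_B}K$, the common fiber is $P_{E_B}K=\oplus_{j=2}^\ell P_{E_{\tilde\sigma_j}}K$, and $K=\oplus_{j=1}^\ell P_{E_{\tilde\sigma_j}}K$ follows.

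The main obstacle I anticipate is converting the tightness of the Fubini--H\"older chain into set-level (rather than merely volumetric) rigidity on the fibers---this relies on compactness of $K$ and on the uniqueness of direct-sum factorisations---as well as bookkeeping for the possibly degenerate entries $B_i=\emptyset$ or $B_i=B$ in $\{B_i\}$, which must be accommodated so the inductive hypothesis genuinely applies.
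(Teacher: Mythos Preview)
The paper does not prove Theorem~\ref{Bollobas-Thomason-eq} directly; it records it as folklore ``based on Valdimarsson~\cite{Val08}''. The intended argument is that Bollob\'as--Thomason is the Geometric Brascamp--Lieb inequality \eqref{BL} with $E_i=E_{\sigma_i}$, $c_i=\frac1s$, $f_i=\mathbf{1}_{P_{E_{\sigma_i}}K}$, and in this coordinate situation the independent subspaces are precisely the $E_{\tilde\sigma_j}$ while $F_{\rm dep}=\{o\}$; Valdimarsson's equality description \eqref{BLtheoequaform} then forces each $\mathbf{1}_{P_{E_{\sigma_i}}K}$ to factor through the atomic projections, and together with $|K|=\big|\bigcap_i P_{E_{\sigma_i}}^{-1}P_{E_{\sigma_i}}K\big|$ this yields the product structure. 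Your approach is genuinely different and more elementary: you bypass the Brascamp--Lieb machinery entirely, reprove the inequality by slicing along one atom via Fubini, the lower-dimensional inequality, the monotonicity $P_{E_{B_i}}K_y\subset P_{E_{B_i}}P_{E_B}K$, and H\"older on the $s$ surviving integrals, and then characterize equality inductively. The gain is self-containment and transparency; the cost is the degenerate-$B_i$ bookkeeping and having to promote volumetric rigidity to set rigidity by hand rather than reading it off a structure theorem.

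On that last point, compactness alone does \emph{not} suffice, contrary to what you suggest. With $n=2$, $\sigma_1=\{1\}$, $\sigma_2=\{2\}$, $s=1$, the compact set $K=[0,1]^2\cup\{(2,2)\}$ satisfies $|K|=1=|P_{E_{\{1\}}}K|\cdot|P_{E_{\{2\}}}K|$ but $K\neq P_{E_{\{1\}}}K\times P_{E_{\{2\}}}K$. Hence the step ``$|P_{E_{B_i}}K_y|=|P_{E_{B_i}}P_{E_B}K|$ plus inclusion gives set equality'' fails in general, and Theorem~\ref{Bollobas-Thomason-eq} itself must be read up to Lebesgue null sets (or under an extra hypothesis such as $K=\cl(\inti K)$). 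This caveat is not special to your method---the Valdimarsson route also delivers the product structure only almost everywhere---but your plan should state explicitly which version you are proving and drop the claim that compactness closes the gap.
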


We note that  Ellis, Friedgut, Kindler, and Yehudayoff \cite{EFKY16} even proved a stability version of the Bollob\'as--Thomason inequality Theorem~\ref{Bollobas-Thomason} in the case relevant to information theory, and hence that of the Loomis--Whitney inequality as well. 

The real starting point of our note is Meyer's  dual Loomis--Whitney inequality in \cite{Mey88}, where equality holds for affine cross-polytopes.

\begin{theo}[Meyer]
\label{dual-Loomis-Whitney}
If $K\subset \R^n$ is a compact convex set with $o\in\mathrm{int}\, K$, then
\begin{equation}
\label{dual-Loomis-Whitney-ineq}
|K|^{n-1}\geq \frac{n!}{n^n}\prod_{i=1}^n\left |K\cap e_i^\bot\right |,
\end{equation}
with equality if and only if
$K=\conv \{\pm\lambda_ie_i\}_{i=1}^n$ for $\lambda_i>0$, $i=1,\ldots,n$.
\end{theo}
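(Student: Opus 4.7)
My plan is to proceed by induction on the dimension $n$. For $n=2$, the one-dimensional Brunn--Minkowski cone bound (following from the concavity of $t\mapsto |K\cap\{x_1=t\}|$) gives
\[
|K|\geq \tfrac{a_1+b_1}{2}\,|K\cap e_1^\perp|,
\]
where $[-b_1,a_1]\cdot e_1=K\cap\R e_1$. Since in dimension $2$ the set $K\cap e_2^\perp$ is exactly the segment $K\cap\R e_1$, we have $a_1+b_1=|K\cap e_2^\perp|$, which is precisely Meyer's inequality with the sharp constant $\frac{2!}{2^2}=\frac{1}{2}$. Equality forces $K$ to be a cone over $K\cap e_1^\perp$ on each side of $e_1^\perp$, which immediately gives the asserted cross-polytope form.

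For general $n$, I would decompose $K$ into its $2^n$ orthant pieces $K_\epsilon=K\cap O_\epsilon$, where $O_\epsilon=\{x:\epsilon_ix_i\geq 0\}$ for $\epsilon\in\{\pm 1\}^n$. Inside each orthant, $K_\epsilon$ is a convex body with $o$ as a corner vertex and axial intercepts $\epsilon_ia_i^{\epsilon_i}$ (writing $a_i^+:=a_i$, $a_i^-:=b_i$); the Brunn--Minkowski cone bound gives $|K_\epsilon|\geq\tfrac{a_i^{\epsilon_i}}{n}|K_\epsilon\cap e_i^\perp|$ for each $i$, and the orthant simplex $\conv\{o,\epsilon_ia_i^{\epsilon_i}e_i\}_{i=1}^n$ saturates all $n$ of these bounds simultaneously. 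My plan is then to prove the orthant-wise analog
\[
|K_\epsilon|^{n-1}\geq\frac{(n-1)!}{n^{n-1}}\prod_{i=1}^n|K_\epsilon\cap e_i^\perp|
\]
(noting $\tfrac{(n-1)!}{n^{n-1}}=\tfrac{n!}{n^n}$) by induction on $n$ applied to the $(n-1)$-dimensional sections, and then to assemble the $2^n$ orthant bounds into the global inequality for $K$.

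The chief obstacle is this assembly. The product $\prod_i|K\cap e_i^\perp|=\prod_i\sum_\eta|K_\epsilon\cap e_i^\perp|$ expands into $2^{n(n-1)}$ mixed terms, whereas the natural orthant sum $\sum_\epsilon\prod_i|K_\epsilon\cap e_i^\perp|$ picks up only the $2^n$ ``diagonal'' ones, leaving a considerable combinatorial gap. Overcoming this gap appears to require either a delicate H\"older-type interpolation across orthants, or, more in the spirit of the present paper, a direct application of Barthe's reverse Brascamp--Lieb inequality to the geometric datum $(e_i^\perp,\tfrac{1}{n-1})_{i=1}^n$. For the latter, the naive choice $f_i=\mathbf{1}_{K\cap e_i^\perp}$ yields only the suboptimal constant $((n-1)/n)^{n(n-1)}$, since the Minkowski sum $\tfrac{1}{n-1}\sum_i(K\cap e_i^\perp)$ is strictly contained in $\tfrac{n}{n-1}K$ for non-cross-polytope $K$; a cleverer choice of test functions, calibrated to saturate at the cross-polytope equality case, seems to be needed. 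The equality case then follows by tracing: simultaneous saturation of every cone bound forces each orthant piece to be the simplex $\conv\{o,\epsilon_ia_i^{\epsilon_i}e_i\}_{i=1}^n$, and gluing these $2^n$ simplices yields a cross-polytope with all its vertices on the coordinate axes, which is the affine cross-polytope of the statement.
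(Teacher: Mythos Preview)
Your orthant-induction plan has the gap you yourself flag, and it is a real one: summing the $2^n$ orthant-wise bounds does not reproduce $\prod_i|K\cap e_i^\perp|$, and no cheap H\"older step closes it. There is a second, quieter problem: your proposed orthant inequality $|K_\epsilon|^{n-1}\geq\frac{(n-1)!}{n^{n-1}}\prod_i|K_\epsilon\cap e_i^\perp|$ is \emph{not} an instance of the $(n{-}1)$-dimensional induction hypothesis, because $o$ lies on the boundary of $K_\epsilon\cap e_i^\perp$, not in its relative interior, so Meyer in dimension $n-1$ does not apply to it directly.

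More to the point, you have correctly diagnosed the cure. The ``cleverer choice of test functions'' you ask for is exactly what the paper (following Liakopoulos) uses: set $f_i(x)=e^{-\|x\|_K}=e^{-\|x\|_{K\cap E_i}}$ on $E_i=e_i^\perp$. The two identities
\[
\int_{\R^n}e^{-\|x\|_K}\,dx=n!\,|K|,\qquad \int_{E_i}e^{-\|x\|_{K\cap E_i}}\,dx=(n-1)!\,|K\cap E_i|,
\]
together with the log-concavity of $e^{-\|x\|_K}$ (which gives $f(z/n)^n\geq\prod_i f_i(x_i)^{c_i}$ whenever $z=\sum_ic_ix_i$), feed directly into Barthe's reverse Brascamp--Lieb inequality for the datum $(e_i^\perp,\frac{1}{n-1})$ and produce the sharp constant $n!/n^n$ on the nose. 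For equality, the paper does not trace orthant simplices; it invokes the equality characterization in Barthe's inequality. Extremizers must be a Gaussian on the dependent subspace $F_{\dep}$ times free log-concave factors on the independent subspaces. Since $\|x\|_K$ is $1$-homogeneous while $\langle Ax,x\rangle$ is quadratic, $F_{\dep}=\{o\}$; the independent subspaces here are the lines $\R e_j=\bigcap_{i\neq j}e_i^\perp$, and the resulting factorization $\|x\|_K=\sum_j\|P_{\R e_j}x\|_K$ is precisely the gauge of $\conv\{K\cap\R e_j\}_{j=1}^n$.
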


Let us observe that, unlike in the case of the Loomis-Whitney inequality, the assumption that $K$ is convex is necessary in Meyer's inequality \eqref{dual-Loomis-Whitney-ineq}. 

We note that various reverse and dual Loomis--Whitney type inequalities were proved by
Campi, Gardner, Gronchi \cite{CGG16}, Brazitikos \emph{et al.} \cites{BDG17,BGL18},
Alonso-Guti\'errez \emph{et al.} \cites{ABBC,AB}. In particular, according to \eqref{GBLdata-BT},
the following theorem, due to Liakopoulos \cite{Lia19}, is a generalization of Meyer's inequality on one hand,
and is the generalized  dual form of the Bollob\'as--Thomason inequality (Theorem~\ref{Bollobas-Thomason}) on the other hand.

\begin{theo}[Liakopoulos]
\label{Liakopoulos-RBL}
If $K\subset \R^n$ is a compact convex set with $o\in\inti K$, and
 the linear  subspaces $E_1,\ldots,E_k$ of $\R^n$ and real numbers $c_1,\ldots,c_k>0$,  with
${\rm dim}\,E_i=n_i\in\{1,\ldots,n-1\}$, form a Geometric Brascamp–Lieb datum; that is,
\begin{equation}
\label{highdimcond-Liakopoulos}
\sum_{i=1}^kc_iP_{E_i}=I_n,
\end{equation}
then
\begin{equation}
\label{Liakopoulos-RBL-ineq}
|K|\geq \frac{\prod_{i=1}^k(n_i!)^{c_i}}{n!}\cdot \prod_{i=1}^k\left |K\cap E_i\right |^{c_i}.
\end{equation}
\end{theo}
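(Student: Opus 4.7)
My plan is to deduce the inequality from Barthe's geometric Reverse Brascamp--Lieb inequality, which is the natural functional dual to the hypothesis \eqref{highdimcond-Liakopoulos}. The key choice will be to apply it to the test functions
$$f_i(y) = e^{-\|y\|_{K\cap E_i}},\qquad y\in E_i,\ i=1,\ldots,k,$$
where $\|\cdot\|_{K\cap E_i}$ denotes the gauge (Minkowski functional) of the slice $K\cap E_i$ viewed inside the subspace $E_i$. A standard polar-coordinate computation in $E_i$ yields
$$\int_{E_i}f_i\,dy \;=\; d_i!\cdot|K\cap E_i|,$$
which produces the factor $\prod_i(d_i!\,|K\cap E_i|)^{c_i}$ on the right-hand side of Barthe's inequality.

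The core step is to bound the left-hand side of Barthe's RBL from above by $n!\,|K|$. For our geometric datum, Barthe's theorem asserts
$$\int_{\R^n}^{*}\sup_{x=\sum_i c_iy_i,\,y_i\in E_i}\prod_{i=1}^k f_i(y_i)^{c_i}\,dx \;\geq\; \prod_{i=1}^k\left(\int_{E_i}f_i\right)^{c_i},$$
and with our choice of $f_i$ the integrand inside the supremum equals $\exp\bigl(-\sum_i c_i\|y_i\|_{K\cap E_i}\bigr)$. I would then establish the pointwise ``superadditivity'' estimate
$$\|x\|_K \;\leq\; \sum_{i=1}^k c_i\|y_i\|_{K\cap E_i}\qquad\text{whenever}\ x=\sum_{i=1}^k c_iy_i,\ y_i\in E_i.$$
Its proof is immediate: setting $t_i=\|y_i\|_{K\cap E_i}$, the vectors $y_i/t_i$ lie in $K\cap E_i\subset K$, so convexity of $K$ together with $o\in K$ forces $x=\sum_i c_it_i(y_i/t_i)\in\bigl(\sum_i c_it_i\bigr)K$. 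Consequently the supremum is pointwise dominated by $e^{-\|x\|_K}$, and a final polar-coordinate computation gives $\int_{\R^n}e^{-\|x\|_K}\,dx = n!\,|K|$.

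Combining these two bounds, Barthe's inequality yields
$$n!\,|K|\;\geq\;\prod_{i=1}^k\bigl(d_i!\,|K\cap E_i|\bigr)^{c_i},$$
which, after rearrangement, is the desired estimate \eqref{Liakopoulos-RBL-ineq}. I expect no serious technical obstacle: the hypothesis \eqref{highdimcond-Liakopoulos} is precisely the condition under which Barthe's RBL holds with sharp constant one, and the device of replacing characteristic functions by gauge-exponentials is standard and automatically extracts the factorial factors. The only real subtlety is the convex-geometric superadditivity estimate for $\|\cdot\|_K$ above; once that is granted, the rest of the argument is essentially mechanical.
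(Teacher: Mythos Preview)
Your proposal is correct and follows essentially the same route as the paper: apply Barthe's geometric Reverse Brascamp--Lieb inequality to $f_i=e^{-\|\cdot\|_{K\cap E_i}}$ on $E_i$ (the paper writes these as $e^{-\|\cdot\|_K}$ restricted to $E_i$, which is the same thing), use the gauge--integral identity $\int e^{-\|\cdot\|_M}=(\dim)!\,|M|$ on both sides, and bridge them via the pointwise estimate $\|x\|_K\le\sum_i c_i\|y_i\|_{K\cap E_i}$ whenever $x=\sum_i c_iy_i$. The only cosmetic difference is that the paper derives this last estimate by writing $\tfrac{x}{n}=\sum_i\tfrac{c_id_i}{n}\cdot\tfrac{y_i}{d_i}$ and invoking log-concavity of $e^{-\|\cdot\|_K}$, whereas you argue directly from convexity of $K$; the two arguments are equivalent.
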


Like in Meyer's inequality, the convexity of $K$ is a necessary condition in Liakopoulos' inequality.

Given the  linear  subspaces $E_1,\ldots,E_k$ of $\R^n$ as part of a Geometric Brascamp--Lieb datum in Theorem~\ref{Liakopoulos-RBL}, we say that
$F=\cap_{i=1}^kE_{i,0}$ is an independent subspace if either
$E_{i,0}=E_i$ or $E_{i,0}=E_i^\bot$
holds for any $i=1,\ldots,k$, and ${\rm dim}\,F\geq 1$ (see also the description above Theorem~\ref{BLtheoequa}). In general, there may not exist any independent subspace associated with $E_1,\ldots,E_k$; however, if $F\neq F'$ are independent subspaces, then $F'\subset F^\bot$.

Our main result characterizes equality in Liakopoulos's Theorem~\ref{Liakopoulos-RBL}.

\begin{theo}
\label{Liakopoulos-RBL-equa}
We have equality in \eqref{Liakopoulos-RBL-ineq} in Theorem~\ref{Liakopoulos-RBL} if and only if
\begin{itemize}
  \item[(i)] $\oplus_{j=1}^\ell F_j=\R^n$ holds  for the independent subspaces $F_1,\ldots,F_\ell\subset\R^n$ associated with $E_1,\ldots,E_k$, and
\item[(ii)] $K=\conv\{K\cap F_{j}\}_{j=1}^\ell$.
\end{itemize}
\end{theo}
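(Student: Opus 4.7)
The plan is to trace the equality case through Liakopoulos's proof of Theorem~\ref{Liakopoulos-RBL}, which rests on Barthe's geometric reverse Brascamp--Lieb (RBL) inequality applied to the functions $f_i\colon E_i\to[0,\infty)$ defined by $f_i(y)=e^{-\|y\|_K}$, where $\|\cdot\|_K$ is the Minkowski gauge of $K$. Since $\|y\|_K=\|y\|_{K\cap E_i}$ for $y\in E_i$, one has $\int_{E_i}f_i=d_i!\,|K\cap E_i|$. Setting
\[
h(x):=\inf\bigl\{\textstyle\sum_{i}c_i\|y_i\|_K\,:\,x=\sum_{i}c_iy_i,\ y_i\in E_i\bigr\},
\]
Barthe's RBL reads $\int_{\R^n}e^{-h(x)}\,dx\geq \prod_i(d_i!\,|K\cap E_i|)^{c_i}$, whereas the convexity of $\|\cdot\|_K$ combined with the identity $x=\sum_ic_iP_{E_i}x$ yields $h(x)\geq\|x\|_K$ and therefore $\int e^{-h}\leq n!\,|K|$. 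Consequently, equality in \eqref{Liakopoulos-RBL-ineq} is equivalent to the simultaneous equalities \textbf{(A)} in Barthe's RBL and \textbf{(B)} $h(x)=\|x\|_K$ for a.e.\ $x$.

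To extract (i) and (ii), invoke the equality characterization of Barthe's geometric RBL (the Valdimarsson-type result for the reverse inequality): \textbf{(A)} forces an orthogonal decomposition $\R^n=F_1\oplus\cdots\oplus F_\ell$ into independent subspaces associated to the BL datum such that each $f_i$ factorizes as a product along $E_i=\bigoplus_{j\in J_i}F_j$ with $J_i:=\{j:F_j\subset E_i\}$. This is already (i). Taking logarithms and using the positive homogeneity of $\|\cdot\|_K$ to pin down the factors yields the additive identity $\|y\|_K=\sum_{j\in J_i}\|P_{F_j}y\|_{K\cap F_j}$ for every $y\in E_i$. Plugging this into $h(x)$, and observing that for any decomposition $x=\sum_ic_iy_i$ one has $P_{F_j}x=\sum_{i\colon j\in J_i}c_iP_{F_j}y_i$ (because $F_j\subset E_i^\bot$ when $j\notin J_i$, while $P_{F_j}P_{E_i}=P_{F_j}$ when $F_j\subset E_i$), the triangle inequality in $F_j$ gives $h(x)\geq\sum_j\|P_{F_j}x\|_{K\cap F_j}$, with equality attained by $y_i=P_{E_i}x$. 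Condition \textbf{(B)} now upgrades this to the global identity $\|x\|_K=\sum_{j=1}^\ell\|P_{F_j}x\|_{K\cap F_j}$ on $\R^n$, which is precisely the Minkowski gauge of $\conv\{K\cap F_j\}_{j=1}^\ell$; this gives (ii).

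The converse direction assumes (i) and (ii). Using the orthogonality of the $F_j$'s and the convex-hull description of $K$, classical formulas yield $|K|=\tfrac{\prod_j d'_j!}{n!}\prod_j|K\cap F_j|$ and $|K\cap E_i|=\tfrac{\prod_{j\in J_i}d'_j!}{d_i!}\prod_{j\in J_i}|K\cap F_j|$ with $d'_j:=\dim F_j$; combining these with $\sum_{i\colon j\in J_i}c_i=1$ (obtained by restricting $\sum_ic_iP_{E_i}=I_n$ to $F_j$) and $\sum_{j\in J_i}d'_j=d_i$, a direct substitution verifies equality in \eqref{Liakopoulos-RBL-ineq}. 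The principal obstacle lies in the opening step of the necessity argument: sharply invoking the equality characterization of Barthe's geometric RBL and translating the multiplicative factorization of each $f_i$ into the gauge-additivity identity on $E_i$, a passage that rests on the uniqueness of positively homogeneous decompositions and on the fact that the independent subspaces $F_j$ are intrinsically determined by the BL datum.
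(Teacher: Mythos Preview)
Your overall strategy matches the paper's: reduce equality in \eqref{Liakopoulos-RBL-ineq} to simultaneous equality in Barthe's geometric RBL for $f_i=e^{-\|\cdot\|_K}|_{E_i}$ (your \textbf{(A)}) and in the pointwise bound $h\geq\|\cdot\|_K$ (your \textbf{(B)}), then invoke Theorem~\ref{RBLtheoequa} and translate back to the gauge. Your computation $h(x)=\sum_j\|P_{F_j}x\|_{K\cap F_j}$ followed by \textbf{(B)} to obtain (ii) is essentially the paper's argument around \eqref{znormKsEi}--\eqref{RBLtheoequaform-Liakopoulos-zM}, arguably packaged a bit more directly; the converse is fine.

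There is, however, a genuine gap in your derivation of (i). You assert that \textbf{(A)} ``forces an orthogonal decomposition $\R^n=F_1\oplus\cdots\oplus F_\ell$ into independent subspaces'', but that is \emph{not} what the equality characterization of Barthe's RBL says. Theorem~\ref{RBLtheoequa} permits a nontrivial dependent subspace $F_{\dep}$ on which each extremizer carries a Gaussian factor $e^{-\langle AP_{F_{\dep}}x,\,P_{F_{\dep}}x\rangle}$; only $F_{\dep}^\bot$ is guaranteed to split into independent subspaces. The paper has to rule out $F_{\dep}\neq\{o\}$ \emph{a posteriori} from the specific shape of the $f_i$: taking logarithms in \eqref{RBLtheoequaform} gives, for $x\in E_i$,
\[
\|x\|_K=\langle AP_{F_{\dep}}x,\,P_{F_{\dep}}x\rangle+\sum_{F_j\subset E_i}\varphi_j\bigl(P_{F_j}x\bigr),
\]
and if $\dim F_{\dep}\geq 1$ one picks $x_0\in (E_i\cap F_{\dep})\setminus\{o\}$ (such an $E_i$ exists because restricting \eqref{highdimcond-Liakopoulos} to $F_{\dep}$ yields $\sum_i c_iP_{E_i\cap F_{\dep}}=I|_{F_{\dep}}$) and compares degrees of homogeneity: $\|tx_0\|_K$ is linear in $t$ while $\langle A(tx_0),tx_0\rangle$ is quadratic, a contradiction. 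Your closing remark about ``uniqueness of positively homogeneous decompositions'' gestures toward this, but does not supply the argument; without it, (i) is not established, and your subsequent computation of $h$ (which presupposes $E_i=\bigoplus_{j\in J_i}F_j$) is unjustified.
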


The structure of the paper is as follows. We introduce the Brascamp--Lieb inequality and Barthe's reverse form, which play a crucial role in the proof of Theorem~\ref{Liakopoulos-RBL-equa} in Section~\ref{secBLandRBL}. We prove Theorem~\ref{Liakopoulos-RBL-equa} in Section~\ref{secLiakopoulos-RBL-equa}.

\section{Equality in Barthe's Geometric Reverse Brascamp--Lieb inequality and some related results}
\label{secBLandRBL}

Let the linear subspaces $E_1,\ldots,E_k\subset\R^n$ and real numbers  $c_1,\ldots,c_k>0$  form a Geometric Brascamp--Lieb datum. If $\dim E_i=n_i$ for $i=1,\ldots, n$, then 
\begin{equation}
\label{sum-of-ci}
\sum_{i=1}^kc_in_i=n
\end{equation}
by comparing traces in \eqref{BL-data-def}.

The Geometric Brascamp--Lieb inequality (Theorem~\ref{BLtheo}) traces back to the works of Brascamp and Lieb \cite{BrL76} and Ball \cites{Bal89,Bal91} in the rank-one case, where $\dim E_i=1$ for every $i=1,\ldots,k$, and to Lieb \cite{Lie90}, Barthe \cite{Bar98} and Carbery, Christ, and Tao \cite{BCCT08} in the general setting. We note that a rank one Geometric Brascamp–Lieb datum
 is called a Parseval frame in computer science (cf. Casazza, Tran, and Tremain \cite{CTT20}).

\begin{theo}[Brascamp, Lieb, Ball, Barthe]
\label{BLtheo}
For any non-trivial linear subspaces  $E_1,\ldots,E_k$ of $\R^n$ and constants $c_1,\ldots,c_k>0$ satisfying
\eqref{BL-data-def}, and for non-negative $f_i\in L_1(E_i)$, we have
\begin{equation}
\label{BL}
\int_{\R^n}\prod_{i=1}^kf_i(P_{E_i}x)^{c_i}\,dx
\leq \prod_{i=1}^k\left(\int_{E_i}f_i\right)^{c_i}
\end{equation}
\end{theo}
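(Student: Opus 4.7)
The plan is to prove the inequality \eqref{BL} by Barthe's mass-transportation method, whose key feature is to show that centered Gaussians are extremizers with extremal constant one. By homogeneity, normalize $\int_{E_i} f_i = 1$ for each $i$, reducing \eqref{BL} to $\int_{\R^n} \prod_i f_i(P_{E_i}x)^{c_i}\,dx \leq 1$. Set $g_i(y) = (2\pi)^{-d_i/2} e^{-|y|^2/2}$ on $E_i$. Tracing \eqref{highdimcond0} against $x\otimes x$ yields $\sum_i c_i |P_{E_i}x|^2 = |x|^2$, which together with \eqref{sum-of-ci} gives $\prod_i g_i(P_{E_i}x)^{c_i} = (2\pi)^{-n/2}e^{-|x|^2/2}$, so the Gaussian $f_i=g_i$ achieves equality. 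The goal is to transport arbitrary $f_i$ to $g_i$ while controlling the change of variable.

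After a standard smoothing/truncation, let $\phi_i\colon E_i\to\R$ be a smooth strictly convex Brenier potential whose gradient $T_i = \nabla\phi_i$ pushes $f_i$ forward to $g_i$. The Monge--Amp\`ere equation then gives $f_i(y) = g_i(T_i(y))\det D^2\phi_i(y)$ a.e.\ Define
\[
\Theta\colon\R^n\to\R^n,\qquad \Theta(x) = \sum_{i=1}^k c_i\, T_i(P_{E_i}x),
\]
viewing $T_i(P_{E_i}x)\in E_i\subset\R^n$. Its differential $D\Theta(x)=\sum_i c_i P_{E_i}^* D^2\phi_i(P_{E_i}x)P_{E_i}$ is positive definite, so $\Theta$ is a diffeomorphism onto its (open) image.

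The argument then hinges on two consequences of the Geometric Brascamp--Lieb condition \eqref{highdimcond0}. The first is the Ball--Barthe determinantal inequality: for every choice of positive semidefinite $A_i\colon E_i\to E_i$,
\[
\det\Bigl(\sum_{i=1}^k c_i P_{E_i}^* A_i P_{E_i}\Bigr) \;\geq\; \prod_{i=1}^k \det(A_i)^{c_i}.
\]
The second is a vector inequality: for any $v_i\in E_i$,
\[
\Bigl|\sum_{i=1}^k c_i v_i\Bigr|^2 \;\leq\; \sum_{i=1}^k c_i |v_i|^2,
\]
which holds because \eqref{highdimcond0} makes $x\mapsto (P_{E_i}x)_i$ an isometric embedding of $\R^n$ into $\bigoplus_i E_i$ equipped with the weighted inner product $\langle(v_i),(w_i)\rangle_c=\sum_i c_i\langle v_i,w_i\rangle$, so its adjoint $(v_i)\mapsto\sum_i c_i v_i$ is norm-decreasing.

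Combining the Monge--Amp\`ere identity with these two inequalities (taking $A_i = D^2\phi_i(P_{E_i}x)$ in the first and $v_i = T_i(P_{E_i}x)$ in the second) and $\sum_i c_i d_i = n$ yields the pointwise bound
\[
\prod_{i=1}^k f_i(P_{E_i}x)^{c_i} \;\leq\; (2\pi)^{-n/2}\,e^{-|\Theta(x)|^2/2}\,\det D\Theta(x),
\]
after which integrating with the substitution $y=\Theta(x)$ gives $\int_{\R^n}\prod_i f_i(P_{E_i}x)^{c_i}\,dx \leq \int_{\Theta(\R^n)}(2\pi)^{-n/2}e^{-|y|^2/2}\,dy \leq 1$. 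The chief obstacle is the determinantal inequality above, which is the genuine algebraic content of the theorem and requires a separate argument (for instance, induction on $k$ via Schur complements reducing to Ball's rank-one lemma, or a direct multilinear-algebra expansion exploiting the partition of unity \eqref{highdimcond0}). The auxiliary technicalities---smoothness and strict positivity of the $f_i$, the diffeomorphism property of $\Theta$, and finiteness/measurability of the integrals---are handled by an approximation of each $f_i$ by smooth, strictly positive densities with Gaussian tails, followed by monotone/dominated convergence.
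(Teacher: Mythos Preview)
The paper does not prove Theorem~\ref{BLtheo}; it is stated as a known result and attributed to Brascamp--Lieb, Ball, Lieb, and Barthe via the references \cite{BrL76}, \cite{Bal89}, \cite{Bal91}, \cite{Lie90}, \cite{Bar98}. There is therefore nothing in the paper to compare your argument to.

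Assessed on its own, your sketch is the correct outline of Barthe's optimal-transport proof from \cite{Bar98}: the normalization, the Monge--Amp\`ere change of variables, the map $\Theta(x)=\sum_i c_i T_i(P_{E_i}x)$, the determinantal (Ball--Barthe) lemma, the quadratic inequality $|\sum_i c_i v_i|^2\le\sum_i c_i|v_i|^2$, and the final substitution $y=\Theta(x)$ are all in the right place with the right signs. You are also right to flag the determinantal inequality $\det\bigl(\sum_i c_i P_{E_i}^*A_iP_{E_i}\bigr)\ge\prod_i(\det A_i)^{c_i}$ as the nontrivial algebraic core; without a proof of that lemma the argument is incomplete, but you say so explicitly. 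One small point worth tightening: you assert that $\Theta$ is a diffeomorphism onto its image because $D\Theta$ is positive definite; strictly, positive-definiteness of the symmetric differential gives that $\Theta$ is the gradient of a strictly convex function, hence injective, which is what you need for the change of variables---the word ``diffeomorphism'' is a slight overstatement before smoothing, but the injectivity is enough.
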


\begin{remark}
When $E_1=\cdots=E_k=\R^n$, then the statement \eqref{BL} becomes H\"older's inequality, and $\sum_{i=1}^k c_i=1$ holds in that case (cf. \eqref{sum-of-ci}).
\end{remark}

According to \eqref{GBLdata-BT}, the Geometric Brascamp--Lieb inequality (Theorem~\ref{BLtheo}) directly yields the Bollob\'as--Thomason inequality (Theorem~\ref{GBLdata-BT}) by choosing $E_i=E_{\sigma_i}$, $c_i=\frac1s$ and $f_i=\mathbf{1}_{P_{E_{\sigma_i}}}$ for $i=1,\ldots,k$.
Equality in Theorem~\ref{BLtheo} occurs, for example, when each $f_i$ is a centered Gaussian density like  $f_i(x)=e^{-\pi|x|^2}$ for all $i=1,\ldots, k$, where $|\cdot|$ denotes the Euclidean norm on $\R^n$. 

Partial progress on the equality case of the Geometric Brascamp–Lieb inequality was made by Barthe \cite{Bar98}, Carlen, Lieb, and Loss \cite{CLL04}, and Bennett, Carbery, Christ, and Tao \cite{BCCT08}. The full characterization was later achieved by Valdimarsson \cite{Val08}. Before stating Valdimarsson's result, we establish the necessary notation. Assume that the subspaces $E_1,\ldots,E_k$ of $\R^n$  and constants $c_1,\ldots,c_k>0$ form a Geometric Brascamp--Lieb datum. We call a nontrivial linear subspace $V$  critical if 
\[ \sum_{i=1}^k c_i \dim(E_i \cap V) = \dim V.\]
According to \cite{BCCT08}, this criterion is equivalent to the decomposition
\[E_i = (E_i \cap V) + (E_i \cap V^\perp) \quad \text{for each } i=1,\ldots,k. \]

In order to characterize equality in Theorem~\ref{Liakopoulos-RBL}, we also recall the notions of independent and dependent subspaces associated with a Geometric Brascamp--Lieb datum. These notions were originally defined by Valdimarsson \cite{Val08}. Let $J$ denote the set of all functions from $\{1,\ldots,k\}$ to $\{0,1\}$. For $\varepsilon\in J$, let $F_{(\varepsilon)}=\cap_{i=1}^kE_i^{(\varepsilon(i))}$, where $E_i^{(0)}=E_i$ and $E_i^{(1)}=E_i^\bot$
for $i=1,\ldots,k$. If $\dim F_{(\varepsilon)}\geq 1$, then we call $F_{(\varepsilon)}$ independent.  
Note that if $F$ and $F'$ are two different independent subspaces, then $F'\subset F^\perp$. 
Let $J_0=\{j\in J\colon \dim F_{(\varepsilon)}\geq 1\}$.
We call the orthogonal complement $F_{\mathrm{dep}}$ of
$\oplus_{\varepsilon \in J_0}F_{(\varepsilon)}$ in $\R^n$ the dependent subspace.
In this way, $\R^n$ decomposes into a direct sum of pairwise orthogonal linear subspaces
\begin{equation}
\label{independent-dependent0}
\R^n=\left(\oplus_{\varepsilon \in J_0}F_{(\varepsilon)}\right)\oplus F_{\dep}.
\end{equation}
Observe that there may not exist any independent subspaces, in which case $\R^n=F_{\dep}$. It may also happen that the dependent subspace is trivial; then $\R^n=\oplus_{\varepsilon \in J_0}F_{(\varepsilon)}$. In general, all independent subspaces and the dependent subspace (if  $F_{\dep}$ is non-trivial) are critical subspaces.

For a non-zero linear subspace $L \subset \R^n$, we say that the matrix representing a linear transformation $A:L\to L$ is \emph{positive definite} if $A$ is symmetric (i.e., $A^T = A$) and $x^T A x > 0$ for all  $x \in L \setminus \{0\}$.

\begin{theo}[Valdimarsson]
\label{BLtheoequa}
For  any proper linear subspaces  $E_1,\ldots,E_k$ of $\R^n$ and real numbers $c_1,\ldots,c_k>0$ satisfying
\eqref{BL-data-def}, let us assume that equality holds in the Brascamp--Lieb inequality \eqref{BL}
for non-negative $f_i\in L_1(E_i)$, $i=1,\ldots,k$.
If $F_{\dep}\neq\R^n$, then let $F_1,\ldots,F_\ell$ be the independent subspaces, and if
$F_{\dep}=\R^n$, then let $\ell=1$ and $F_1=\{0\}$. Then there exist
$b\in F_{\dep}$ and $\theta_i>0$ for $i=1,\ldots,k$,
integrable non-negative $h_{j}:\,F_j\to[0,\infty)$  for $j=1,\ldots,\ell$, and a positive definite matrix
$A:F_{\dep}\to F_{\dep}$ such that
the eigenspaces of $A$ are critical subspaces and
\begin{equation}
\label{BLtheoequaform}
f_i(x)=\theta_i e^{-\langle AP_{F_{\dep}}x,P_{F_{\dep}}x-b\rangle}\prod_{F_j\subset E_i}h_{j}(P_{F_j}(x))
\mbox{ \ \ \  for Lebesgue a.e. $x\in E_i$}.
\end{equation}
On the other hand, if for any $i=1,\ldots,k$, $f_i$ is of the form as in \eqref{BLtheoequaform}, then equality holds in \eqref{BL} for $f_1,\ldots,f_k$.
\end{theo}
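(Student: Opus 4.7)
The plan is to verify both directions of Valdimarsson's characterization using the orthogonal decomposition $\R^n = \oplus_{\varepsilon\in J_0} F_{(\varepsilon)} \oplus F_{\dep}$ as the organizing framework.

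For sufficiency, assume each $f_i$ has the form \eqref{BLtheoequaform}. I would apply Fubini to the left-hand side of \eqref{BL} using the orthogonal decomposition. The key observation is that for every independent subspace $F_j$ and every $i$, either $F_j \subset E_i$ or $F_j \subset E_i^\perp$, so the product in \eqref{BLtheoequaform} factors cleanly: each $h_j$ depends only on the $F_j$-coordinate of $x$, while the Gaussian factor depends only on the $F_{\dep}$-coordinate. Integration over each $F_j$-slice reduces to a pure H\"older equality, since restricting \eqref{highdimcond0} to $F_j$ gives $\sum_{i : F_j \subset E_i} c_i = 1$ and all relevant subspaces collapse to $F_j$ itself. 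On $F_{\dep}$, the surviving Gaussian integrand saturates the restricted Brascamp--Lieb inequality by the Bennett--Carbery--Christ--Tao Gaussian analysis, because the eigenspaces of $A$ are critical subspaces for the restricted datum.

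For necessity, I would first peel off the independent subspaces. Suppose equality holds in \eqref{BL}. For each independent $F = F_{(\varepsilon)}$, the orthogonal splitting $E_i = (E_i \cap F) \oplus (E_i \cap F^\perp)$ holds for every $i$ (since $F \subset E_i$ or $F \perp E_i$), allowing Fubini to rewrite the integral in \eqref{BL} as an iterated integral along $F$ and $F^\perp$. The inner integrand over $F$, at fixed $z \in F^\perp$, is $\prod_{i : F \subset E_i} f_i(y + P_{E_i}z)^{c_i}$, a pure H\"older integrand (again because $\sum_{i : F \subset E_i} c_i = 1$) whose equality case forces the slice functions $y \mapsto f_i(y + P_{E_i}z)$ to be simultaneously proportional to a common profile $h_j$ on $F_j = F$. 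This extracts the factor $h_j(P_F x)$ from every $f_i$ with $F \subset E_i$, and the residual inequality on $F^\perp$ is Brascamp--Lieb for the same data with each $E_i$ replaced by $E_i \cap F^\perp$. Iterating the peeling over all independent subspaces reduces the problem to functions on $F_{\dep}$ with no nontrivial independent subspace remaining.

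The main obstacle is then showing that on $F_{\dep}$, in the absence of any independent subspaces, every extremizer must be Gaussian with covariance whose eigenspaces are critical. I would combine Lieb's theorem (which identifies the Brascamp--Lieb constant, here equal to $1$, with the Gaussian sub-extremizer) and the Carlen--Lieb--Loss heat-flow monotonicity. Along the simultaneous heat flow $f_i \mapsto e^{t\Delta}f_i$, the normalized functional $(f_1,\ldots,f_k) \mapsto \int_{\R^n} \prod_i f_i(P_{E_i}x)^{c_i}\,dx / \prod_i\bigl(\int_{E_i}f_i\bigr)^{c_i}$ is monotone, and at an extremizer the infinitesimal derivative must vanish identically. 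In the absence of independent subspaces, this rigidity forces each $f_i$ to be Gaussian a.e. The covariance structure is then pinned down by the Gaussian equality case of Bennett--Carbery--Christ--Tao, producing a single positive definite $A$ on $F_{\dep}$ whose eigenspaces are critical; the translation vector $b$ encodes the residual freedom to recenter all $f_i$ by a common translation.
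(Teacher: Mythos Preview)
The paper does not prove Theorem~\ref{BLtheoequa}; it is stated as Valdimarsson's result and attributed to \cite{Val08} without argument. There is therefore no proof in the paper to compare your proposal against.

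That said, your outline is broadly in the spirit of how Valdimarsson's argument goes, but two steps would need substantial work to be made rigorous. First, in the peeling step you assert that for each fixed $z\in F^\perp$ the H\"older equality case forces the slice functions $y\mapsto f_i(y+P_{E_i}z)$ to be proportional to a single profile $h_j$; what H\"older gives you is only that for a.e.\ $z$ the slices are pairwise proportional, and extracting a \emph{single} $h_j$ independent of $z$ (so that $f_i$ genuinely tensorizes as $h_j(P_Fx)\cdot g_i(P_{F^\perp}x)$) requires an additional measurable-selection and consistency argument. Second, the claim that on $F_{\dep}$ vanishing of the heat-flow derivative forces each $f_i$ to be Gaussian is the heart of the matter and is not an immediate consequence of Carlen--Lieb--Loss monotonicity; Valdimarsson's proof of this step is considerably more delicate than a one-line appeal to rigidity. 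Your sufficiency direction, by contrast, is essentially complete.
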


The subspaces $F_i$, $i=1,\ldots, \ell$ are called \emph{independent} to express the fact that the functions $h_j$ defined on $F_j$ may be selected arbitrarily and independently of one another in Theorem~\ref{BLtheoequa}.

Barthe \cite{Bar98} established a reverse version of the Geometric Brascamp–Lieb inequality. 

\begin{theo}[Barthe]
\label{RBLtheo}
For any non-trivial linear subspaces  $E_1,\ldots,E_k$ of $\R^n$ and $c_1,\ldots,c_k>0$ satisfying
\eqref{BL-data-def}, and for non-negative $f_i\in L_1(E_i)$, we have
\begin{equation}
\label{RBL}
\int_{\R^n}^*\sup_{x=\sum_{i=1}^kc_ix_i,\, x_i\in E_i}\;\prod_{i=1}^kf_i(x_i)^{c_i}\,dx
\geq \prod_{i=1}^k\left(\int_{E_i}f_i\right)^{c_i}.
\end{equation}
\end{theo}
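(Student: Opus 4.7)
The plan is to follow an optimal transport strategy: push forward a Gaussian reference measure on each $E_i$ onto $f_i$, assemble the resulting transports into a single map $\Theta\colon\R^n\to\R^n$, and control its Jacobian by a determinantal inequality that encodes the hypothesis $\sum_ic_iP_{E_i}=I_n$.

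By standard truncation, convolution, and monotone approximation I would first reduce to the case where each $f_i$ is smooth, strictly positive, and compactly supported on $E_i$ with $\int_{E_i}f_i=1$; the target then becomes $\int^*_{\R^n}F\geq 1$, where
\[
F(x)=\sup_{x=\sum_ic_ix_i,\,x_i\in E_i}\prod_i f_i(x_i)^{c_i}.
\]
Equip each $E_i$ with the Gaussian density $g_i(y)=e^{-\pi\|y\|^2}$, and let $T_i=\nabla\varphi_i\colon E_i\to E_i$ be the Brenier map pushing $g_i$ onto $f_i$. Then $DT_i(y)$ is a symmetric positive definite endomorphism of $E_i$ almost everywhere, and the Monge--Amp\`ere equation $f_i(T_i(y))\det\nolimits_{E_i}DT_i(y)=e^{-\pi\|y\|^2}$ holds.

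Define $\Theta(y)=\sum_{i=1}^kc_iT_i(P_{E_i}y)$. Writing $x=\Theta(y)$ and $x_i=T_i(P_{E_i}y)\in E_i$ gives $x=\sum_ic_ix_i$, so $F(\Theta(y))\geq \prod_if_i(x_i)^{c_i}$. The map $\Theta$ is the gradient of the convex function $y\mapsto \sum_ic_i\varphi_i(P_{E_i}y)$, hence a.e.\ injective with a.e.\ symmetric positive definite differential $D\Theta(y)=\sum_ic_iDT_i(P_{E_i}y)\,P_{E_i}$, so a change of variables yields
\[
\int^*_{\R^n}F(x)\,dx\geq \int_{\R^n}\prod_i f_i(T_iP_{E_i}y)^{c_i}\det D\Theta(y)\,dy.
\]

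The analytic core is the Ball--Barthe determinantal inequality: if $S_i\colon E_i\to E_i$ is positive definite for each $i$, the geometric hypothesis forces
\[
\det\!\Big(\sum_{i=1}^kc_iS_iP_{E_i}\Big)\geq \prod_{i=1}^k\bigl(\det\nolimits_{E_i}S_i\bigr)^{c_i}.
\]
I would prove this by Cauchy--Binet expansion of the left determinant into a sum, indexed by multi-indices selecting coordinates from each $E_i$, of products of principal minors of the $S_i$ weighted by squared minors of $\sqrt{c_i}\,P_{E_i}$, then apply AM--GM term by term; the weights match the right-hand side exactly because of the trace identity $\sum_ic_in_i=n$. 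Verifying this combinatorial/AM--GM step cleanly is the main obstacle. Granted the lemma, applying it with $S_i=DT_i(P_{E_i}y)$ and invoking Monge--Amp\`ere collapses the integrand to $\exp\!\bigl(-\pi\sum_ic_i\|P_{E_i}y\|^2\bigr)$, and the hypothesis gives $\sum_ic_i\|P_{E_i}y\|^2=\langle \sum_ic_iP_{E_i}y,y\rangle=\|y\|^2$, so the integral equals $\int_{\R^n}e^{-\pi\|y\|^2}\,dy=1$, matching $\prod_i(\int_{E_i}f_i)^{c_i}=1$. The outer integral $\int^*$ is used only to bypass the potential non-measurability of $F$, which causes no loss since every measurable majorant of $F$ is bounded below by the explicit chain above; removing the smoothness and positivity reductions at the end is routine by monotone approximation.
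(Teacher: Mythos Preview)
The paper does not supply its own proof of Theorem~\ref{RBLtheo}; it is quoted as Barthe's result from \cite{Bar98} and used as a tool (the paper even lists the three known proof strategies---optimal transport, heat flow, probabilistic---without carrying any of them out). So there is no ``paper's proof'' to compare against.

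Your sketch is precisely Barthe's original optimal transport argument from \cite{Bar98}, and the outline is sound: Brenier maps $T_i$ on each $E_i$, the assembled convex potential $\Phi(y)=\sum_i c_i\varphi_i(P_{E_i}y)$ giving $\Theta=\nabla\Phi$, the change of variables, and the Ball--Barthe determinantal lemma feeding into the Monge--Amp\`ere relation so that the integrand collapses to the Gaussian $e^{-\pi\|y\|^2}$ via the geometric datum condition. Two small points of care: your phrase ``strictly positive and compactly supported on $E_i$'' is self-contradictory as written---the usual reduction is to smooth densities that are positive on a bounded convex support, which is what you need for regularity of $T_i$; and the higher-rank Ball--Barthe inequality is genuinely more delicate than the rank-one Cauchy--Binet/AM--GM argument you describe (Barthe's proof in \cite{Bar98} handles it, but the combinatorics are heavier than a single AM--GM step). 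You correctly flag this as the main obstacle, so the proposal is honest about where the work lies.
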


\begin{remark}
If $E_1=\cdots=E_k=\R^n$, then Theorem~\ref{RBLtheo} becomes the Pr\'ekopa--Leindler inequality \eqref{Prekopa-Leindler}, because $P_{E_i}=I_n$ and thus $\sum_{i=1}^k c_i=1$ (cf. \eqref{sum-of-ci}).
\end{remark}

 We note that Barthe’s Geometric Reverse Brascamp--Lieb inequality (Theorem~\ref{RBLtheo}) was used by Liakopoulos to prove the dual form of Bollob\'as--Thomason inequality (Theorem~\ref{Liakopoulos-RBL}) by choosing $f_i = 1_{K \cap E_i}$ for $i = 1,\dots,k$. In Section~\ref{secLiakopoulos-RBL-equa}, we follow this argument to exploit the equality case.
 B\"or\"oczky, Kalantzopoulos, Xi \cite{BKX23} proved the following characterization of equality in Barthe's inequality \eqref{RBL}.

\begin{theo}[\cite{BKX23}]
\label{RBLtheoequa}
For linear subspaces  $E_1,\ldots,E_k$ of $\R^n$ and $c_1,\ldots,c_k>0$ satisfying
\eqref{BL-data-def},
if $F_{\dep}\neq\R^n$, then let $F_1,\ldots,F_\ell$ be the independent subspaces, and if
$F_{\dep}=\R^n$, then let $\ell=1$ and $F_1=\{0\}$.

If equality holds in Barthe's Geometric Reverse Brascamp--Lieb inequality \eqref{RBL}
for non-negative $f_i\in L_1(E_i)$ with $\int_{E_i}f_i>0$, $i=1,\ldots,k$, then
\begin{equation}
\label{RBLtheoequaform}
f_i(x)=\theta_i e^{-\langle AP_{F_{\dep}}x,P_{F_{\dep}}x-b_i\rangle}\prod_{F_j\subset E_i}h_{j}\left(P_{F_j}(x-w_i)\right)
\end{equation}
for Lebesgue a.e. $x\in E_i$, where
\begin{itemize}
\item $\theta_i>0$,
$b_i\in E_i\cap F_{\rm dep}$ and 
$w_i\in E_i$ for $i=1,\ldots,k$,
\item $h_{j}\in L_1(F_j)$ is non-negative for $j=1,\ldots,\ell$, and, in addition,
$h_j$ is log-concave if there exist $\alpha\neq \beta$ with $F_j\subset E_\alpha\cap E_\beta$,
\item $A:F_{\dep}\to F_{\dep}$ is a positive definite matrix such that
the eigenspaces of $A$ are critical subspaces.
\end{itemize}
On the other hand, if for any $i=1,\ldots,k$, $f_i$ is of the form as in \eqref{RBLtheoequaform} and equality holds for  all
$x\in E_i$ in \eqref{RBLtheoequaform}, then equality holds in \eqref{RBL} for $f_1,\ldots,f_k$.
\end{theo}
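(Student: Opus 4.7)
The plan is to derive \eqref{Liakopoulos-RBL-ineq} as a two-step consequence of Barthe's Reverse Brascamp--Lieb inequality (Theorem~\ref{RBLtheo}) applied to the exponential gauge functions $f_i(x)=e^{-\|x\|_{K\cap E_i}}$ on $E_i$, and then to characterize equality by invoking the companion Theorem~\ref{RBLtheoequa}. Here $\|\cdot\|_C$ denotes the Minkowski gauge of a convex body $C$ with the origin in its interior, so that $\int_Ee^{-\|y\|_C}\,dy=(\dim E)!\,|C|$ when $C\subset E$. Set
\begin{equation*}
\phi(x)=\inf\Bigl\{\textstyle{\sum_{i=1}^k}c_i\|x_i\|_{K\cap E_i}\colon x=\sum_{i=1}^kc_ix_i,\,x_i\in E_i\Bigr\}.
\end{equation*}
Then the chain
\begin{equation*}
n!\,|K|\;\ge\;\int_{\R^n}^{*}e^{-\phi(x)}\,dx\;\ge\;\prod_{i=1}^k\bigl(d_i!\,|K\cap E_i|\bigr)^{c_i}
\end{equation*}
yields \eqref{Liakopoulos-RBL-ineq}: the first inequality follows from $\phi\ge\|\cdot\|_K$, an immediate consequence of subadditivity and positive homogeneity of the gauge $\|\cdot\|_K$ (noting that $\|\cdot\|_{K\cap E_i}=\|\cdot\|_K$ on $E_i$), combined with $\int_{\R^n}e^{-\|x\|_K}\,dx=n!\,|K|$; the second is the Barthe inequality \eqref{RBL}. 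Equality in \eqref{Liakopoulos-RBL-ineq} is therefore equivalent to the simultaneous equalities $(A)$ $\phi(x)=\|x\|_K$ for Lebesgue-a.e.\ (and hence, by continuity of gauges, every) $x\in\R^n$, and $(B)$ equality in \eqref{RBL} for these $f_i$.

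For the necessity direction, $(B)$ activates Theorem~\ref{RBLtheoequa}, forcing the representation \eqref{RBLtheoequaform} for each $f_i$ a.e.\ on $E_i$, which then extends to every $x\in E_i$ by continuity of the gauge and log-concavity. For a fixed $x\in E_i\cap F_{\dep}$ and $\lambda>0$, the left side becomes $e^{-\lambda\|x\|_{K\cap E_i}}$ (linear in $\lambda$ in the exponent), while the right side contains the factor $e^{-\lambda^2\langle Ax,x\rangle}$; positive definiteness of $A$ on $F_{\dep}$ forces $x=0$, whence $E_i\cap F_{\dep}=\{0\}$ for every $i$. A short induction on the equivalent characterization $E_i=(E_i\cap V)+(E_i\cap V^{\bot})$ of criticality shows that a sum of pairwise orthogonal critical subspaces is critical, so $F_{\dep}$ itself is critical; consequently $E_i\subset F_{\dep}^{\bot}=\oplus_jF_j$, and $\R^n=\sum_iE_i$ (via $I_n=\sum_ic_iP_{E_i}$) yields $\oplus_jF_j=\R^n$, i.e.\ (i). With the quadratic factor gone, \eqref{RBLtheoequaform} simplifies to $f_i(x)=\theta_i\prod_{F_j\subset E_i}h_j(P_{F_j}(x-w_i))$; restricting $x$ to a single $F_j\subset E_i$ and using that $\|\cdot\|_{K\cap F_j}$ and $\|\cdot\|_{K\cap E_i}$ agree on $F_j$, positive homogeneity forces $h_j(y)=h_j(-P_{F_j}w_i)\,e^{-\|y+P_{F_j}w_i\|_{K\cap F_j}}$. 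A further homogeneity check cancels the translations $w_i$ upon reassembling, giving
\begin{equation*}
\|x\|_{K\cap E_i}=\sum_{F_j\subset E_i}\|P_{F_j}x\|_{K\cap F_j}\qquad\text{for all }x\in E_i,
\end{equation*}
equivalently $K\cap E_i=\conv\{K\cap F_j\colon F_j\subset E_i\}$.

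To pass to (ii), combine this with $(A)$. Evaluating $I_n=\sum_ic_iP_{E_i}$ at any vector in $F_j$ gives the key identity $\sum_{i\colon F_j\subset E_i}c_i=1$, so the infimum defining $\phi$ decouples across the orthogonal $F_j$'s: the constraint $x=\sum c_ix_i$ unpacks as $P_{F_j}x=\sum_{i\colon F_j\subset E_i}c_iP_{F_j}x_i$ for each $j$, and subadditivity plus positive homogeneity with $\sum_{i\colon F_j\subset E_i}c_i=1$ reduce the $j$-th block to $\|P_{F_j}x\|_{K\cap F_j}$ (optimum attained by taking every $P_{F_j}x_i$ equal to $P_{F_j}x$). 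Hence $\phi(x)=\sum_j\|P_{F_j}x\|_{K\cap F_j}=\|x\|_L$ with $L=\conv\{K\cap F_j\}_{j=1}^{\ell}$, and $(A)$ gives $\|\cdot\|_K=\|\cdot\|_L$, i.e.\ $K=L$, which is (ii). Conversely, assuming (i) and (ii), the elementary volume formula $|\conv\{C_j\}_j|=\frac{\prod_j(\dim F_j)!}{n!}\prod_j|C_j|$ for bodies $C_j\subset F_j$ with $\oplus_jF_j=\R^n$ and $o\in C_j$, applied to $K$ and to each $K\cap E_i=\conv\{K\cap F_j\colon F_j\subset E_i\}$, together with $\sum_{i\colon F_j\subset E_i}c_i=1$, verifies equality in \eqref{Liakopoulos-RBL-ineq} by direct bookkeeping. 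The main obstacle I anticipate is the necessity direction: rigorously stripping off the quadratic $A$-term and the translations $w_i$ from \eqref{RBLtheoequaform} by exploiting positive homogeneity of the gauges, and then consolidating the section-wise identity $K\cap E_i=\conv\{K\cap F_j\colon F_j\subset E_i\}$ with the global equality $\phi=\|\cdot\|_K$ to upgrade it to the global statement $K=\conv\{K\cap F_j\}_{j=1}^{\ell}$.
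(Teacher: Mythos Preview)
Your proposal does not prove the stated Theorem~\ref{RBLtheoequa}. That theorem is the characterization of equality in Barthe's Geometric Reverse Brascamp--Lieb inequality, a result the paper \emph{cites} from \cite{BKX23} and does not prove; your argument instead \emph{uses} Theorem~\ref{RBLtheoequa} as a black box in order to prove Theorem~\ref{Liakopoulos-RBL-equa} (the equality case of Liakopoulos's inequality). So as a proof of the statement actually given, the proposal is vacuous.

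If, as seems likely, the intended target was Theorem~\ref{Liakopoulos-RBL-equa}, then your approach coincides with the paper's proof in Section~\ref{secLiakopoulos-RBL-equa}. Both derive \eqref{Liakopoulos-RBL-ineq} as a chain of two inequalities applied to $f_i=e^{-\|\cdot\|_{K\cap E_i}}$ (subadditivity/homogeneity of $\|\cdot\|_K$, then Barthe's inequality \eqref{RBL}), and both analyze equality by invoking Theorem~\ref{RBLtheoequa}, eliminating $F_{\dep}$ via the incompatibility of a positive-definite quadratic with a positively homogeneous gauge, and then using equality in the first step to pass from the section-wise identity on each $E_i$ to the global conclusion $K=\conv\{K\cap F_j\}_{j=1}^\ell$. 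The differences are organizational: you package the first inequality through the infimal convolution $\phi$ and obtain $F_{\dep}=\{0\}$ by first showing $E_i\cap F_{\dep}=\{0\}$ for every $i$ and then noting that $F_{\dep}$, being the orthogonal sum of the critical eigenspaces of $A$, is itself critical (so $E_i\subset F_{\dep}^\bot$); the paper instead scales through $f(z/n)^n$ and argues directly that $F_{\dep}\neq\{0\}$ would force some $E_i\cap F_{\dep}\neq\{0\}$ via $\sum_i c_iP_{E_i\cap F_{\dep}}=I_n|_{F_{\dep}}$, reaching the same homogeneity contradiction. The paper also disposes of the translations $b_i,w_i$ by appealing to Lemma~\ref{RBLtheoequa-remarks} together with the fact that each $f_i$ has its unique maximum at $o$, whereas you argue this directly from positive homogeneity; both routes are valid and lead to the same simplified form \eqref{RBLtheoequaform-Liakopoulos}.
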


We discuss some special cases of Theorem~\ref{RBLtheoequa} that will be useful in the proof of Theorem~\ref{Liakopoulos-RBL-equa}. In Lemma~\ref{RBLtheoequa-remarks} we use the notation of Theorem~\ref{RBLtheoequa}.

\begin{lemma} 
\label{RBLtheoequa-remarks}
Let $f_1,\ldots,f_k$ form a set of extremizers of Barthe's Geometric Reverse Brascamp--Lieb inequality.
\begin{itemize}
\item[(i)] If for each $\alpha=1,\ldots,k$, the remaining subspaces $\{E_i\}_{i\neq \alpha}$ span $\R^n$,
then each $f_i$ is $\mathcal{H}^{n_i}$ a.e. equal to some log-concave function on $E_i$.

\item[(ii)] If each $f_i$ is log-concave, then one may assume that each $h_j$, $j=1,\ldots,\ell$, is log-concave, and hence \eqref{RBLtheoequaform} holds for all $x\in \mathrm{relint}\,\mathrm{supp}\, f_i$.

\item[(iii)] If each $f_i$ is even, then one may assume that each $b_i=w_i=o$ for $i=1,\ldots,k$, and each $h_j$ is even for $j=1,\ldots,\ell$ (and, in fact, each $h_j$ being even and log-concave can be assumed if each $f_i$ is even and log-concave).

\end{itemize}
\end{lemma}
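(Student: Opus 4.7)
The plan is to exploit the structural decomposition
\[
E_i=(E_i\cap F_{\dep})\oplus\bigoplus_{F_j\subset E_i}F_j,
\]
which follows from the criticality of each eigenspace of $A$ (hence of $F_{\dep}$) together with the fact that every independent subspace satisfies either $F_j\subset E_i$ or $F_j\subset E_i^\perp$. Writing $x=x^{\dep}+x^\perp$ along this orthogonal splitting, the formula \eqref{RBLtheoequaform} decouples into a Gaussian factor in $x^{\dep}\in E_i\cap F_{\dep}$ times a tensor product $\prod_{F_j\subset E_i}h_j(P_{F_j}(x^\perp)-P_{F_j}(w_i))$ along the orthogonal summands $F_j\subset E_i$. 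All three parts will then follow from this factorisation combined with standard log-concavity manipulations.

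For \textbf{(i)}, I would argue that every $F_j\subset E_i$ appearing in the formula for $f_i$ lies in at least two of the $E_l$'s. If not, then by the definition of an independent subspace we have $F_j\subset E_l^\perp$ for every $l\neq i$, so $F_j\perp\sum_{l\neq i}E_l=\R^n$ by the spanning hypothesis, contradicting $\dim F_j\geq 1$. Theorem~\ref{RBLtheoequa} then guarantees that each such $h_j$ is log-concave, and the Gaussian factor is log-concave on $E_i$ by positive-definiteness of $A$; hence the right-hand side of \eqref{RBLtheoequaform} is log-concave and a.e.\ equal to $f_i$.

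For \textbf{(ii)}, divide $f_i$ by the log-concave Gaussian; the quotient is log-concave a.e.\ on $E_i$ and factorises as a product across the pairwise orthogonal $F_j\subset E_i$. Fixing all but one variable and invoking concavity of $\log$, each factor must be log-concave, so every $h_j$ admits a log-concave representative on $F_j$, which we adopt. With both sides of \eqref{RBLtheoequaform} log-concave in $x$ and agreeing a.e., they coincide throughout $\relint\supp f_i$ by continuity of log-concave functions on the interior of their supports.

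For \textbf{(iii)}, the evenness of $f_i$ separates into evenness of the Gaussian in $x^{\dep}$ and of the $h_j$-product in $x^\perp$, because those depend on orthogonal subspaces. The first forces $\langle Ab_i,x^{\dep}\rangle\equiv 0$ on $E_i\cap F_{\dep}$, and since $b_i\in E_i\cap F_{\dep}$, positive-definiteness of $A$ gives $b_i=o$. For the second, isolating one block in the product shows that each translated function $z\mapsto h_j(z-w_{ij})$ is even, where $w_{ij}:=P_{F_j}(w_i)$. When $F_j\subset E_i\cap E_{i'}$ for some $i'\neq i$, $h_j$ is log-concave by the theorem, and an integrable log-concave function with two distinct centres of symmetry must be periodic and hence vanish almost everywhere, contradicting $\int f_i>0$; so $w_{ij}=w_{i'j}$. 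In all cases $\tilde h_j(z):=h_j(z-w_{ij})$ is even and independent of $i$, remains log-concave if $h_j$ was, and the formula can be rewritten with $b_i=w_i=o$. The main technical point is precisely this periodicity/consistency argument guaranteeing a common centre for each $h_j$; the remainder is linear-algebraic bookkeeping on the orthogonal decomposition.
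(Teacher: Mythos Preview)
Your argument is correct, and for parts (i) and (ii) it follows essentially the same route as the paper: the spanning hypothesis forces every independent $F_j$ to lie in at least two of the $E_i$, whence Theorem~\ref{RBLtheoequa} gives log-concavity of each $h_j$; and in (ii) one freezes the orthogonal coordinates to read off log-concavity of a single $h_m$ from that of $f_\alpha$ along an affine copy of $F_m$.

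For part (iii) your treatment differs from, and is more complete than, the paper's. The paper's proof is the single sentence ``one replaces each $h_{j,i}$ by $y\mapsto\sqrt{h_{j,i}(y)\,h_{j,i}(-y)}$'', which produces even factors but does not explain why, when $F_j\subset E_i\cap E_{i'}$, the two symmetrisations coincide, as is required to obtain a \emph{single} even $h_j$ in the form \eqref{RBLtheoequaform}. You close this gap with the periodicity argument: two distinct centres of symmetry make $h_j$ periodic, hence zero a.e.\ by integrability, contradicting $\int f_i>0$; therefore $P_{F_j}w_i=P_{F_j}w_{i'}$ and a common even $\tilde h_j$ exists. Two minor remarks: you invoke log-concavity of $h_j$ in that step, but it is unnecessary, since any integrable function with a nonzero period vanishes a.e.; and the claim that evenness of $f_i$ separates across the orthogonal splitting $x=x^{\dep}+x^\perp$ deserves one line (fix $x^\perp$ with both $H(x^\perp),H(-x^\perp)>0$, deduce $G(x^{\dep})=cG(-x^{\dep})$, then $c^2=1$ and $c>0$ give $c=1$). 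Your direct derivation of $b_i=o$ from $\langle Ab_i,b_i\rangle=0$ and positive-definiteness is also cleaner than burying it in the symmetrisation.
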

\begin{proof} For (i), the condition that for any $\alpha=1,\ldots,k$, $\{E_i\}_{i\neq \alpha}$ spans $\R^n$ in Theorem~\ref{RBLtheoequa} is equivalent to saying that for any $F_j$, $j=1,\ldots,\ell$, there exist $\alpha\neq \beta$ with $F_j\subset E_\alpha\cap E_\beta$. 

In the argument for (ii) and (iii), if $F_{\dep}\neq \{o\}$, then we write $h_{0,i}(y)=e^{-\langle Ay,y-b_i\rangle}$ for $i=1,\ldots,k$ and $y\in E_i\cap F_{\dep}$, and if $F_j\subset E_i$ is an independent subspace, then we write $h_{j,i}\left(y\right)=h_{j}\left(y-P_{F_j}w_i\right)$ for $y\in F_j$.

For (ii), first we show that even if  some independent subspace $F_m$ is contained in a single $E_\alpha$, then $h_m$ is Lebesgue a.e. equal to some log-concave function on $F_m$. In this case, let $z\in E_\alpha\cap F_m^\bot$ such that $h_{j,\alpha}(P_{F_j}z)>0$ whenever $F_j\subset E_\alpha$ and $j\neq m$, and 
for Lebesgue  a.e. $x\in z+F_m$, we have
$$
f_\alpha(x)= 
\theta_\alpha h_{0,\alpha}\left(P_{F_{\rm dep}}z\right)
h_{m,\alpha}\left(P_{F_m} x \right)
\prod_{F_j\subset E_\alpha,\&\,j\neq m}h_{j,\alpha}\left(P_{F_j} z \right).
$$
As $f_\alpha$ is log-concave,  $h_{m,\alpha}$ agrees with a log-concave function for Lebesgue a.e. $y\in F_m$, and hence the same holds for $h_\alpha$. 

Now, as log-concave functions are continuous on their support, we may assume that each $h_j$ is actually log-concave, and hence 
\eqref{RBLtheoequaform} holds for all $x\in \relint(\supp f_i)$ for all $i=1,\ldots,k$.

To show (iii),  one replaces each $h_{j,i}$ with the function $y\mapsto \sqrt{h_{j,i}(y)\cdot h_{j,i}(-y)}$ that is log-concave if $h_{j,i}$ is log-concave. 
\end{proof}

Part (i) of Lemma~\ref{RBLtheoequa-remarks} states that if for every $E_\alpha$, the remaining subspaces $E_i$, $i\neq\alpha$ of the Brascamp--Lieb datum span $\R^n$, then the functions $h_j$ involved in the extremizers in Theorem~\ref{RBLtheoequa} are necessarily log-concave.
The reason behind this, as already explained in \cite{BKX23}, is as follows. If $F_j$ is one of the independent subspaces, then the coefficients satisfy $\sum_{E_i\supset F_j}c_i=1$. When the functions $f_1,\ldots,f_k$ have the form given in \eqref{RBLtheoequaform}, then the equality case of Barthe’s inequality \eqref{RBL} takes the form
\[
\int^*_{F_j}\sup_{x=\sum_{E_i\supset F_j}c_i x_i\atop x_i\in F_j}h_{j}\Big(x_i-P_{F_j}w_i\Big)^{c_i}\,dx=
\prod_{E_i\supset F_j}\left(\int_{F_j}h_{j}\Big(x-P_{F_j}w_i\Big)\,dx\right)^{c_i}
\left(= \int_{F_j} h_j(x)\,dx\right).
\]
Consequently, if there exist two distinct indices $\alpha\neq \beta$ such that $F_j\subset E_\alpha\cap E_\beta$, then the equality conditions in the Prékopa--Leindler inequality \eqref{Prekopa-Leindler} force each $h_j$ to be log-concave. In contrast, if there is some $\alpha\in{1,\ldots,k}$ for which $F_j\subset E_\beta^\perp$ for all $\beta\neq \alpha$, then no restriction is imposed on $h_j$, and in this case $c_\alpha=1$.

Finally, we note that the 
Brascamp–Lieb inequality and Barthe's reverse form have more general versions proved, for example, via optimal transport by Barthe \cite{Bar98} or using the heat equation by Barthe, Cordero-Erausquin \cite{BaC04} (see, for example, \cite{Bor25+} or \cite{BKX23} for a discussion what is known about the equality case), but our paper is only concerned with the Geometric versions.
The Brascamp--Lieb inequality has been applied in various fields in mathematics, like probability, harmonic analysis  and convex geometry, and
 even in number theory, for example, by Guo, Zhang \cite{GuZ19}. In addition, several versions of the Brascamp--Lieb inequality and Barthe's reverse form have been proved by
Balogh, Krist\'aly \cite{BaK18}
Barthe \cite{Bar04}, Barthe, Cordero-Erausquin \cite{BaC04},
Barthe, Cordero-Erausquin,  Ledoux, Maurey \cite{BCLM11},
Barthe, Wolff \cites{BaW14,BaW22},
 Bennett, Bez, Buschenhenke, Cowling, Flock \cite{BBBCF20},
Bobkov, Colesanti, Fragal\`a \cite{BCF14},
Bueno, Pivovarov \cite{BuP21},
Chen,  Dafnis, Paouris \cite{CDP15},  Courtade, Liu \cite{CoL21},
Duncan \cite{Dun21}, Ghilli, Salani \cite{GhS17},
Kolesnikov, Milman \cite{KoM}, Lehec \cite{Leh14},  Livshyts \cites{Liv21,Liv},
Lutwak, Yang, Zhang \cites{LYZ04,LYZ07},
Maldague \cite{Mal},  Marsiglietti \cite{Mar17}, Rossi, Salani \cites{RoS17,RoS19}.

\section{Proof of Theorem~\ref{Liakopoulos-RBL-equa}}
\label{secLiakopoulos-RBL-equa}

In order to prove Theorem~\ref{Liakopoulos-RBL-equa}, we recall the proof of Theorem~\ref{Liakopoulos-RBL}; see  Liakopoulos \cite{Lia19}*{Section~3}. We will use the following three simple observations. For a convex body $M\subset\R^n$ with $o\in\inti M$, the value of the gauge function $\|x\|_M$ on a vector $x\in\R^n$ is defined as $\|x\|_M=\inf \{\lambda\geq 0\colon x\in \lambda M\}$. Then, since $|M|=\frac 1n\int_{S^{n-1}}\|u\|^{-n}_M\, \dx u$, it holds that
\begin{equation}
\label{VKexponential}
\int_{\R^n}e^{-\|x\|_M}\,\dx x=\int_0	^\infty e^{-r}nr^{n-1}|M|\,\dx r=n! |M|,
\end{equation}
see Schneider \cite{Sch14}*{(1.54) on p. 58}.

We often use the fact that for a convex body $M\subset\R^n$ with $o\in\inti M$ and any linear subspace $F\subset\R^n$, we have $\|x\|_M=\|x\|_{M\cap F}$ for $x\in F$. This yields that if $\oplus_{j=1}^m G_j=\R^n$ holds  for the pairwise orthogonal linear subspaces $G_1,\ldots,G_m\subset\R^n$,
and  $M_j\subset G_j$ is a $\dim G_j$-dimensional compact convex set with $o\in\relint M_j$ for $j=1,\ldots,m$, then
$M=\conv \{M_1,\ldots,M_m\}$ satisfies
\begin{equation}
\label{sumMjnorm}
\|x\|_M=\sum_{i=1}^m\|P_{G_j}x\|_{M_j}
\end{equation}
for any $x\in\R^n$.

If there exist any independent subspaces associated with $E_1,\ldots,E_k$, then let these be
  $F_1,\ldots,F_\ell\subset\R^n$. 
  If there exists no independent subspace, then let $\ell=1$ and $F_1=\{o\}$. Next, let $\widetilde{F}=\oplus_{j=1}^\ell F_j$, and hence $F_0=\widetilde{F}^\bot$ is the dependent subspace associated with $E_1,\ldots, E_k$. 

We define
\begin{align}\label{defoff}
f(x)=e^{-\|x\|_K},
\end{align}
which is a log-concave function with $f(0)=1$, and it satisfies (\emph{cf.} \eqref{VKexponential}),
\begin{equation}
\label{ftonint}
\int_{\R^n}f(y)^n\,\dx y=\int_{\R^n}e^{-n\|y\|_K}\,\dx y=\int_{\R^n}e^{-\|y\|_{\frac{1}{n}K}}\,\dx y
=n!\left|\frac{1}{n}K\right|=\frac{n!}{n^n}\cdot |K|.
\end{equation}
We claim that
\begin{align}\label{weaker-RBL}
n^n\int_{\R^n}f(y)^n\,\dx y\geq\prod_{i=1}^k\Big(\int_{E_i}f(x_i)\,\dx x_i\Big)^{c_i
}.
\end{align}
Equating the traces on the two sides of 
\eqref{BL-data-def}, we deduce
that $n_i=\dim E_i$ satisfy
\begin{equation}
\label{sumdin}
\sum_{i=1}^k\frac{c_in_i}{n}=1.
\end{equation}
For $z=\sum_{i=1}^kc_ix_i$ with $x_i\in E_i$, the log-concavity of $f$ and its definition \eqref{defoff} imply
\begin{equation}
\label{znsupf}
f\left (\frac z n\right )
=f\left (\sum_{i=1}^k \frac{c_in_i}{n}\cdot \frac{x_i}{n_i}\right )
\geq \prod_{i=1}^kf\left (\frac {x_i}{n_i}\right )^{\frac{c_in_i}{n}}= \prod_{i=1}^kf(x_i)^{\frac{c_i}{n}}.
\end{equation}
Now, the monotonicity of the integral and Barthe's inequality \eqref{RBL} with $f_i(x)=f(x)$ for $x\in E_i$ and $i=1,\ldots,k$ yield that
\begin{equation}
\label{sumPEsigmai0}
\int_{\R^n}f\left (\frac z n\right )^{n}\,\dx z\geq
\int_{\R^n}^*\sup_{z=\sum_{i=1}^kc_ix_i,\, x_i\in E_i}\prod_{i=1}^kf_i(x_i)^{c_i}\,\dx z
\geq\prod_{i=1}^k\Big(\int_{E_i}f_i(x_i)\,dx_i\Big)^{c_i}.
\end{equation}
Making the change of variable $y=\frac zn$, we conclude  \eqref{weaker-RBL}. Computing the right-hand side of \eqref{weaker-RBL},  we have
\begin{align}\label{compright}
\int_{E_i}f_i(x_i)\,\dx x_i=\int_{E_i}e^{-\|x_i\|_K}\,\dx x_i=\int_{E_i}e^{-\|x_i\|_{K\cap E_i}}\,\dx x_i=n_i!|K\cap E_i|
\end{align}
by \eqref{VKexponential} for $i=1,\ldots,k$. Therefore, \eqref{ftonint}, \eqref{weaker-RBL} and \eqref{compright} yield \eqref{Liakopoulos-RBL-ineq}.

Now, assume that equality holds in \eqref{Liakopoulos-RBL-ineq},
and hence equality holds in the second inequality of \eqref{sumPEsigmai0}, where we applied Barthe's inequality.
Since each $f_i$ is log-concave, $i=1,\ldots,k$,  and $\supp f_i=E_i$,
it follows from Theorem~\ref{RBLtheoequa} and Lemma~\ref{RBLtheoequa-remarks} that 
\begin{equation}
\label{RBLtheoequaform-Liakopoulos0}
f_i(x)=\theta_i e^{-\langle AP_{F_{\dep}}x,P_{F_{\dep}}x\rangle}\prod_{F_j\subset E_i}h_{j}\left(P_{F_j}(x)\right) 
\end{equation}
for $x\in E_i$, where
\begin{itemize}
\item[(i)] $\theta_i>0$,
\item[(ii)] $h_{j}\in L_1(F_j)$ is  log-concave for $j=1,\ldots,\ell$, and hence bounded,
\item[(iii)] $A \colon F_{\dep}\to F_{\dep}$ is a positive definite matrix such that
the eigenspaces of $A$ are critical subspaces.
\end{itemize}
Since each  $f_i$ has its unique maximum at $o$ which equals $1$, we may assume that each $h_j$ has its unique maximum at  $o$ which is also equal to $1$. It follows that $h_j=e^{-\varphi_j}$ for an even convex function $\varphi_j:F_j\to[0,\infty)$ with $\varphi_j(o)=0$, $j=1,\ldots,\ell$, and hence
for any $i=1,\ldots,k$, \eqref{RBLtheoequaform-Liakopoulos0} reads as
$$
f_i(x)= e^{-\langle AP_{F_{\dep}}x,P_{F_{\dep}}x\rangle}\prod_{F_j\subset E_i}\exp\left(-\varphi_j\left(P_{F_j}(x)\right)\right),
$$
and  any $x\in E_i$ satisfies that
\begin{equation}
\label{RBLtheoequaform-Liakopoulos}
\|x\|_K=\langle AP_{F_{\dep}}x,P_{F_{\dep}}x\rangle+\sum_{F_j\subset E_i}\varphi_{j}\left(P_{F_j}(x)\right).
\end{equation}

Next, we claim that
\begin{equation}
\label{Liakopoulos-equ-no-dep}
F_{\dep}=\{o\}.
\end{equation}
We suppose that there exists $\dim F_{\dep}\geq 1$, and seek a contradiction. As any $E_i$ can be written as
$$
E_i=\left(\oplus_{F_j\subset E_i}F_j\right)\oplus (F_{\dep}\cap E_i),
$$
it follows from \eqref{highdimcond-Liakopoulos} that
\begin{equation}
\sum_{i=1}^kc_iP_{E_i\cap F_{\dep}}=\left.I_n\right|_{F_{\dep}}.
\end{equation}
Therefore, there exists some $E_i$ such that $E_i\cap  F_{\dep}\neq \{o\}$, and let $x_0\in E_i\cap  F_{\dep}\neq \{o\}$. We deduce from \eqref{RBLtheoequaform-Liakopoulos} and $P_{F_j}(x_0)=o$ if $F_j\subset E_i$ that
$$
0=\lim_{t\to\infty}\frac{\|tx_0\|_K}{t^2}=\lim_{t\to\infty}\frac{\langle A(tx_0),(tx_0)\rangle}{t^2}=\langle Ax_0,x_0\rangle>0,
$$
and this contradiction proves \eqref{Liakopoulos-equ-no-dep}. 

 In turn, we deduce from \eqref{Liakopoulos-equ-no-dep}, the definition of $F_{\rm dep}$ and \eqref{RBLtheoequaform-Liakopoulos} that
\begin{align}
\label{RBLtheoequaform-Liakopoulos-i}
\oplus_{j=1}^\ell F_j=&\R^n,\\
\label{RBLtheoequaform-Liakopoulos-ii}
\|x\|_K=&\sum_{F_j\subset E_i}\varphi_{j}\left(P_{F_j}(x)\right)\mbox{ for any $x\in E_i$.}
\end{align}
Now, \eqref{RBLtheoequaform-Liakopoulos-i} proves part (i) of Theorem~\ref{Liakopoulos-RBL-equa}.

In order to verify (ii), notice that if $x\in F_m$ for an $m\in\{1,\ldots,\ell\}$, then $\varphi_{j}\left(P_{F_j}(x)\right)=\varphi_j(o)=0$ for $j\neq m$, and hence \eqref{RBLtheoequaform-Liakopoulos-ii} yields that $\varphi_{m}(x)=\|x\|_K$. We deduce again from \eqref{RBLtheoequaform-Liakopoulos-ii} that if $x\in E_i$, then
\begin{equation}
\label{RBLtheoequaform-Liakopoulos-Ei}
\|x\|_K=\sum_{F_j\subset E_i}\left\|P_{F_j}(x)\right\|_K=\sum_{j=1}^\ell\left\|P_{F_j}(x)\right\|_K,
\end{equation}
where we used   that either $F_j\subset E_i$ or $E_i\subset F_j^\bot$ for any $j\in\{1,\ldots,\ell\}$.

Let
\begin{align*}
M=\conv \{K\cap F_j\}_{j=1,\ldots\ell}.
\end{align*}
It follows from \eqref{sumMjnorm} that if $z\in\R^n$, then
\begin{equation}
\label{RBLtheoequaform-Liakopoulos-zM}
\|z\|_M=\sum_{j=1}^\ell\left\|P_{F_j}(z)\right\|_K.
\end{equation}
Since $M\subset K$, we deduce  that
\begin{equation}
\label{RBLtheoequaform-Liakopoulos-xKless-xM}
\|z\|_K\leq \|z\|_M
\end{equation}
 for any $z\in\R^n$. 
In order to prove $\|z\|_K\geq \|z\|_M$, we note that equality in \eqref{Liakopoulos-RBL-ineq} yields equality in the first inequality in \eqref{sumPEsigmai0} by the argument above.  It follows that if $z\in\R^n$, then
$$
\left(e^{-\|\frac zn\|_K}\right)^n=\sup_{z=\sum_{i=1}^kc_ix_i,\, x_i\in E_i}\prod_{i=1}^ke^{-c_i\|x_i\|_K};
$$  
or, equivalently,
\begin{equation}
\label{znormKsEi}
\|z\|_K= \inf_{z=\sum_{i=1}^k c_ix_i,\, x_i\in E_i}\sum_{i=1}^k c_i\|x_i\|_K
=\inf_{z=\sum_{i=1}^k y_i,\, y_i\in E_i}\sum_{i=1}^k \|y_i\|_K.
\end{equation}
Since we may assume that $\sum_{i=1}^k \|y_i\|_K<\|z\|_K+1$ in \eqref{znormKsEi}, and hence $ \|y_i\|_K<\|z\|_K+1$ for $i=1,\ldots,k$, we deduce, by compactness, the existence of
 $y_i\in E_i$ for $i=1,\ldots,k$ such that
\begin{equation}
\label{zxicond}
z=\sum_{i=1}^k y_i\mbox{ \ and \ }\|z\|_K=\sum_{i=1}^k \|y_i\|_K.
\end{equation} 
Now, apply \eqref{zxicond}, \eqref{RBLtheoequaform-Liakopoulos-Ei}, the triangle inequality, then \eqref{zxicond} again, and finally \eqref{RBLtheoequaform-Liakopoulos-zM} to obtain that
\begin{align*}
\|z\|_K=&\sum_{i=1}^k \|y_i\|_K
=\sum_{i=1}^k \sum_{j=1}^\ell\left\|P_{F_j}(y_i)\right\|_K=
\sum_{j=1}^\ell \sum_{i=1}^k\left\|P_{F_j}(y_i)\right\|_K\\
\geq&\sum_{j=1}^\ell \left\|\sum_{i=1}^k P_{F_j}(y_i)\right\|_K =\sum_{j=1}^\ell \left\|P_{F_j}(z)\right\|_K=\left\|z \right\|_M.
\end{align*} 
Therefore, $\|z\|_K= \|z\|_M$ for any $z\in \R^n$ by \eqref{RBLtheoequaform-Liakopoulos-xKless-xM}, which proves part (ii), thus completing the proof of Theorem~\ref{Liakopoulos-RBL-equa}.

\subsection*{Acknowledgments} We are grateful to the two reviewers whose remarks substantially improved the presentation.

K\'aroly J. B\"or\"oczky was partially supported by Hungarian NKFIH grant no.~150613.
The research of Ferenc Fodor was supported by NKFIH project no.~150151, which has been implemented with the support provided by the Ministry of Culture and Innovation of Hungary from the National Research, Development and Innovation Fund, financed under the ADVANCED\_24 funding scheme.


\end{document}